\documentclass[11pt]{article}

\usepackage[a4paper, left=3.2cm,top=3cm,right=3.2cm,bottom=3.8cm]{geometry}
\setlength{\parskip}{0.5em} \setlength{\parindent}{0em}

\usepackage{concmath}
\usepackage[T1]{fontenc}
\usepackage[utf8]{inputenc}

\usepackage{authblk}
\setcounter{Maxaffil}{5}

\usepackage{amsmath,amsfonts,amssymb,amsthm}
\usepackage{graphicx}
\graphicspath{{pics/},{dnett/}}
\usepackage[colorlinks=true]{hyperref}
\hypersetup{urlcolor=blue, citecolor=red}
\usepackage{mathtools}
\usepackage{framed}
\usepackage{enumitem}
\usepackage{dsfont}

\usepackage{siunitx}
\usepackage{subcaption}
\usepackage{xcolor}
\usepackage{soul}

\usepackage[boxruled,noline]{algorithm2e}

\title{Discretization of learned NETT regularization for  solving inverse problems}
\date{}

\author{Stephan Antholzer}

\affil{Department of Mathematics, University of Innsbruck\authorcr
Technikerstrasse 13, 6020 Innsbruck, Austria\authorcr
 {\tt stephan.antholzer@uibk.ac.at}
 }

\author{Markus Haltmeier}

\affil{Department of Mathematics, University of Innsbruck\authorcr
Technikerstrasse 13, 6020 Innsbruck, Austria\authorcr
 {\tt markus.haltmeier@uibk.ac.at}
 }

\newtheorem{theorem}{Theorem}[section]
\newtheorem{corollary}[theorem]{Corollary}

\newtheorem{lemma}[theorem]{Lemma}
\newtheorem{proposition}[theorem]{Proposition}

\theoremstyle{definition}

\newtheorem{remark}[theorem]{Remark}
\newtheorem{assumption}[theorem]{Assumptions}

\newcommand*{\N}{\mathbb{N}}

\newcommand*{\R}{\mathbb{R}}

\newcommand{\Bo}{\mathbf{B}}

\newcommand{\Ao}{\mathbf{A}}
\newcommand{\Wo}{\mathcal{W}}
\newcommand{\Wd}{\mathbf{W}}
\newcommand{\Mo}{\mathbf{M}}
\newcommand{\Uo}{\mathbf{U}}
\newcommand{\Vo}{\mathbf{V}}
\newcommand{\Sigmao}{\boldsymbol{\Sigma}}

\newcommand*{\transpose}[1]{{#1}^\intercal}

\newcommand{\X}{\mathbb{X}}
\newcommand{\dom}{\mathbb{D}}
\newcommand{\Y}{\mathbb{Y}}
\newcommand{\net}{\boldsymbol \Phi}

\newcommand*{\reg}{\mathcal{R}}
\newcommand*{\tik}{\mathcal{T}}

\newcommand*{\dist}{\mathcal{D}}

\newcommand*{\Breg}{\mathcal{B}}

\newcommand{\data}{y}
\newcommand{\signal}{x}
\newcommand{\rr}{\mathbf{r}}
\newcommand{\sr}{\mathbf{s}}

\newcommand{\bad}{h}
\newcommand{\zsignal}{z}

\renewcommand{\phi}{\varphi}

\DeclarePairedDelimiter{\abs}{\lvert}{\rvert}
\DeclarePairedDelimiter{\norm}{\lVert}{\rVert}
\DeclarePairedDelimiter{\innerprod}{\langle}{\rangle}

\DeclareMathOperator*{\argmin}{argmin}

\newcommand{\edot}{\,\cdot\,}

\def\plus{{\boldsymbol{\texttt{+}}}}

\definecolor{drot}{rgb}{0,0,0}
\definecolor{blor}{rgb}{1,0,1}
\definecolor{blgr}{rgb}{0,1,1}
\definecolor{bblau}{rgb}{0,0,1}
\definecolor{goyel}{rgb}{0.3,0,1}
\definecolor{orred}{rgb}{0,0.39,0}

\allowdisplaybreaks

\begin{document}
\maketitle

\begin{abstract}
Deep learning based reconstruction methods deliver outstanding results for solving inverse problems and are therefore becoming increasingly important. A recently invented class of learning-based reconstruction methods is the so-called NETT (for Network Tikhonov Regularization), which contains a trained neural network as regularizer in generalized Tikhonov regularization. The existing analysis of NETT considers fixed operator and fixed regularizer and analyzes the convergence as the noise level in the data approaches zero. In this paper, we extend the frameworks and analysis considerably to reflect various practical aspects and  take into account discretization of the data space, the solution space, the forward operator and the neural network defining the regularizer. We show the asymptotic convergence of the discretized NETT approach for decreasing noise levels and discretization errors.   Additionally, we derive convergence rates and present numerical results for a limited data problem in photoacoustic tomography.
\end{abstract}

\section{Introduction}

In this paper, we are interested in neural network based solution of inverse problems
of the form
\begin{equation} \label{eq:ip}
	\text{ Find  $\signal$ from data } \quad \data^\delta  = \Ao \signal +  \eta \,.
\end{equation}
Here $\Ao$ is a potentially non-linear operator between Banach spaces $\X$ and $\Y$, $\data^\delta$ are the given noisy data,   $\signal$ is the unknown to be recovered, $\eta$ is the unknown noise perturbation and $\delta \geq 0$ indicates the noise level. Numerous image reconstruction problems, parameter identification tasks or geophysical applications can be stated as such inverse problems  \cite{engl1996regularization,scherzer2009variational,natterer2001mathematical,zhdanov2002geophysical}. Special challenges  in solving inverse problems  are the non-uniqueness of the solutions and the instability of the solutions with respect to the given data.  To overcome  these issues, regularization methods are needed, which are used as criteria for selecting specific solutions and at the same time stabilize the inversion process.

\subsection*{Reconstruction with learned regularizers}

One of the most established class of methods for solving inverse problems is variational regularization  where regularized solutions are defined as minimizers of the  Tikhonov functional \cite{scherzer2009variational,morozov1984methods,tikhonov1977solution}
\begin{equation}\label{eq:tik}
    	 \tik_{\data^\delta,\alpha} \colon \X \to  [0, \infty]  \colon 
	 \signal \mapsto 
	 \dist(\Ao \signal, \data^\delta) + \alpha\reg(\signal) \,.
\end{equation}
Here $\dist$ is a distance like function measuring closeness of the data, $\reg$ a regularization term enforcing regularity of the minimizer and $\alpha$ is the regularization parameter.  In the case that $\dist$ and the regularizer are defined by the Hilbert space norms, \eqref{eq:tik}  is classical Tikhonov regularization for which the theory is quite complete
\cite{engl1996regularization,ivanov2002theory}.  In particular, in this case, convergence rates, which name  quantitative  estimates for the distance between the true  and regularized solutions  are well known. Convergence rates for non-convex regularizers are derived  in \cite{grasmair2010generalized}.

Typical regularization techniques are based on simple hand crafted regularization terms  such as  the total variation  $\norm{f}_{\rm TV} = \int  \abs{\nabla f}$ or quadratic  Sobolev norms $ \norm{\nabla f}_2^2 = \int  \abs{\nabla f}^2$  on some function space. However, these regularizers are quite simplistic and might not well reflect the actual complexity of the underlying class of functions. Therefore, recently, it has been proposed and analyzed in \cite{li2020nett}  to  use machine learning to construct regularizers in a data driven manner. The strategy in \cite{li2020nett}  is to construct a data-driven regularizer via the  following consecutive steps:
\begin{enumerate}[label=(T\arabic*), ref=T\arabic*]
\item \label{T1}
Choose a family  of desired reconstructions $(\signal_i)_{i =1}^n$.
\item \label{T2} For some $\Bo \colon \Y \to \X$, construct undesired reconstructions  $(\Bo\Ao\signal_i)_{i =1}^n$.

\item \label{T3}
Choose a class $(\net_\theta)_{\theta \in \Theta}$ of functions (networks) $\net_\theta \colon \X \to  \X $.

\item \label{T4}
Determine  $\theta^\star \in \Theta $ with $\net_{\theta^\star}(\signal_i)  \simeq \signal_i \wedge \net_{\theta^\star}(\Bo\Ao\signal_i)  \simeq \signal_i$.

\item \label{T5}
Define $\reg (\signal)  = r(\signal,  \net(\signal) ) $ with $\net  =  \net_{\theta^\star}$  for some $r \colon \Y \times \Y \to [0, \infty]$.
\end{enumerate}
For imaging applications, the function class $(\net_\theta)_{\theta \in \Theta}$ can be chosen as  convolutional neural networks which have demonstrated to give powerful classes of mappings between  image spaces. The function  $r$ measures distance between a potential reconstruction $\signal$ and the output of the network $\net(\signal)$, and possibly adds additional regularization  \cite{obmann2019sparse,obmann2021augmented}.  According to the training strategy in item (\ref{T4}) the value of the regularizer will be small if the reconstruction is similar to elements  in $(\signal_i)_{i=1}^n$ and large for elements in $(\Bo\Ao\signal_i)_{i=1}^n$.   A simple example that we will use for our numerical results is the learned regularizer  $\reg (\signal)  = \norm{\signal-  \net(\signal) }^2 + \norm{\signal}_{\rm TV}$.

Convergence analysis and convergence rates for  NETT as well as training strategies have been established in   \cite{haltmeier2020regularization,li2020nett,obmann2021augmented}. A different training strategy  for learning a regularizer has been proposed in \cite{lunz2018adversarial,mukherjee2020learned}.   Note that learning the regularizer first and then minimizing the Tikhonov functional is different from variational and iterative networks   \cite{adler2017solving,aggarwal2018modl,de2019deep,kobler2017variational,ADMMnet} where  an iterative scheme is applied to enroll  the functional $\dist_{ \theta} (\Ao \signal, \data^\delta) + \alpha \reg_ \theta(\signal ) $ which is then trained in an end to end fashion.
Training the regularizer first  has the advantage of being more modular, sharing some similarity with plug and play techniques \cite{romano2017little}, and the network training is independent of the forward operator $\Ao$. Moreover, it enables to derive a convergence analysis as the noise level tends to zero and therefore comes with theoretical recovery guarantees.

\subsection*{Discrete NETT}

The existing analysis of NETT  considers minimizers of the Tikhonov functional \eqref{eq:tik} with regularizer of the form $\reg (\signal)  = r(\signal,  \net(\signal) )$ before  discretization, typically in an infinite dimensional setting. However, in practice, only finite dimensionale approximations of the unknown, the operator and the neural network are given.   To address these issues, in this paper, we study   discrete NETT regularization which considers minimizers of
\begin{equation}\label{eq:NETTdiscrete}
 	\tik_{\data^{\delta},\alpha,n} \colon \X_n \to \Y
	\colon \signal \mapsto
	\dist(\Ao_n z, \data^{\delta}) + \alpha \reg_n(z)\,.
\end{equation}
Here $(\X_n)_{n\in \N}$, $(\Ao_n)_{n \in \N}$ and $(\reg_n)_{n \in\N}$ are  families of subspaces of $\X_n \subseteq \X$, mappings $\Ao_n \colon \X \to \Y$ and  regularizers  $\reg_n \colon \X \to [0,\infty]$, respectively, which reflect discretization of all involved operations.    We present  a full convergence  analysis as the noise level $\delta $ converges to zero and  $n, \alpha$ are chosen accordingly.   Discretization of variational regularization has studied in  \cite{poschl2010discretization} for the case that  $\dist$ is given by the norm distance and the  regularizer $\reg$ is taken convex and fixed. However, in the case of discrete NETT regularization it is natural to consider the case where the regularization depends on the discretization as regularization is learned in a discretized setting based on actual data.  For that purpose our analysis includes non-convex regularizers that are allowed to depend on the discretization and the noise level.

\subsection*{Outline}

The convergence analysis  including  convergence rates  is presented in Section~\ref{sec:NETT}. In Section~\ref{sec:PAT} we will  present numerical results for a non-standard  limited data  problem in photoacoustic tomography that can be considered as simultaneous inpainting and artifact removal problem. We conclude the paper with a short summary and conclusion presented in Section~\ref{sec:conclusion}.

\section{Convergence analysis}
\label{sec:NETT}

In this  section we study the  convergence of   \eqref{eq:NETTdiscrete}  and derive convergence rates.

\subsection{Well-posedness}

First we state the assumptions  that we will use for well-posedness (existence and stability of minimizing NETT).

\begin{assumption}[Conditions for well-posedness]\label{ass-well}\mbox{}
\begin{enumerate}[label=(W\arabic*), ref=W\arabic*]
    \item\label{a1} $\X$, $\Y$ are  Banach spaces, $\X$ reflexive,
    $\dom \subseteq \X$ weakly sequentially closed.

     \item\label{a-dist}  The distance measure $\dist\colon \Y \times \Y \to [0, \infty]$ satisfies
  \begin{enumerate}
           \item\label{a-dist1}
           $\exists \tau\geq1\colon \forall \data_1,\data_2,y_3\in \Y\colon \dist(\data_1,\data_2) \leq \tau \dist(\data_1,y_3) + \tau\dist(y_3,\data_2)$.

    \item\label{a-dist2}
    $\forall \data_1,\data_2\in \Y \colon \dist(\data_1,\data_2) = 0 \Leftrightarrow \data_1=\data_2$.

        \item\label{a-dist3}
         $\forall  \data, \tilde{\data}  \in \Y  \colon \dist(\data, \tilde\data) < \infty \wedge \norm{ \tilde \data - \data_k} \rightarrow 0  \Rightarrow \dist(\data, \data_k) \rightarrow \dist(\data, \tilde\data)$.

             \item\label{a-dist4}
    $\forall \data \in \Y  \colon
     \norm{\data_k - \data}  \to 0 \Rightarrow \dist(\data_k,\data)\to 0 $.

      \item\label{a-dist5}
       $\dist$ is weakly sequentially lower semi-continuous (wslsc).
   \end{enumerate}

   \item\label{a-reg}  $\reg\colon \X \to [0,\infty]$ is proper and wslsc.

      \item\label{a-a} $\Ao\colon \dom \subseteq \X \to \Y$ is  weakly sequentially continuous.

  \item \label{a-c}  $\forall \data , \alpha, C \colon \{\signal \in \X \mid \tik_{\data,\alpha} \leq C\}$ is  nonempty and  bounded.

        \item \label{a6}  $(\X_n)_{n\in\N}$ is a sequence of subspaces of $\X$.

    \item\label{a7}   $(\Ao_n)_{n\in\N}$
    is a family of  weakly sequentially continuous $\Ao_n \colon \dom\to \Y$.

    \item\label{a8}  $(\reg_n)_{n\in\N}$
    is a family of proper wslsc regularizers $\reg_n \colon \X \to [0,\infty]$.

     \item \label{a9}
    $ \forall \data, \alpha, C, n  \colon \{ \signal \in \X_n \mid \tik_{\data,\alpha,n} \leq C \}$ is nonempty and  bounded.
\end{enumerate}
\end{assumption}

Conditions  (\ref{a-dist})-(\ref{a-c}) are quite standard for Tikhonov regularization in Banach spaces to guarantee the existence and stability of minimizers of the Tikhonov functional and the given conditions are similar to   \cite{grasmair2010generalized,haltmeier2020regularization,li2020nett,obmann2019sparse,poschl2008tikhonov,scherzer2009variational,tikhonov1998nonlinear}. In particular,  (\ref{a-dist}) describes the properties that the distance measure $\dist$ should have.  Clearly, the norm distance on $\Y$ fulfills these properties.  Item (\ref{a-dist3}) is the continuity of $\dist(\data, \cdot)$ while (\ref{a-dist4})  considers the continuity of $\dist(\cdot, \data)$ at $\data$.  While (\ref{a-dist3}) is not needed for existence and convergence of NETT it is required for the stability result as shown in \cite[Example 2.7]{obmann2019sparse}. Assumption~(\ref{a-c}) is a coercivity condition; see \cite[Remark 2.4f.]{li2020nett} on how to achieve this for a regularizer defined by neural networks.  Note that for convergence and convergence rates we will require additional conditions that concern the discretization of the reconstruction space, the forward operator and regularizer.

The   references     \cite{grasmair2010generalized,li2020nett,obmann2019sparse,poschl2008tikhonov}  all consider  general distance measures and allow non-convex regularizers. However,  existence and stability of minimizing \eqref{eq:tik} are  shown under assumptions slightly different from  (\ref{a1})-(\ref{a-c}).  Below we therefore give a short proof of the existence and stability results.

\begin{theorem}[Existence and Stability]
    Let Assumption~\ref{ass-well} hold.  Then for all $\data^\delta\in \Y$, $\alpha>0$, $n\in\N$ the following assertions hold true:
    \begin{enumerate}[label=(\alph*), ref=\alph* ]
        \item \label{well-1} $ \argmin  \tik_{\data,\alpha,n} \neq \emptyset$.

            \item  \label{well-2}
            Let $(\data_k)_{k\in\N}\in \Y^\N$ with $\data_k \to \data$
            and consider $\signal_k \in \argmin \tik_{\data_k,\alpha,n}$.
            \begin{itemize}
            \item $(\signal_k)_{k\in\N}$ has at least one weak accumulation point.
            \item Every  weak accumulation point $(\signal_k)_{k\in\N}$ is a minimizer of
            $\tik_{\data,\alpha,n}$.
    \end{itemize}
           \item    \label{well-3}
                        The statements in  (\ref{well-1}),(\ref{well-2}) also hold for $\tik_{\data,\alpha}$ in place of  $\tik_{\data,\alpha,n}$,
    \end{enumerate}
\end{theorem}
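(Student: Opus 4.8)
The plan is to apply the direct method of the calculus of variations to the functional in \eqref{eq:NETTdiscrete} and to reduce the stability claim to a weak lower semicontinuity estimate taken along the sequence of minimizers. I would prove (\ref{well-1}) and (\ref{well-2}) for the discretized functional $\tik_{\data,\alpha,n}$ and then observe that (\ref{well-3}) follows with the identical argument upon replacing $(\X_n,\Ao_n,\reg_n)$ and conditions \ref{a6}--\ref{a9} by $(\X,\Ao,\reg)$ and \ref{a1}--\ref{a-c}; the two functionals share the same structure, so no separate reasoning is needed for the continuous case.

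For existence, let $m:=\inf_{\X_n}\tik_{\data,\alpha,n}$, which is finite since the sublevel sets in \ref{a9} are nonempty, and take a minimizing sequence $(z_k)$. Eventually $\tik_{\data,\alpha,n}(z_k)\le m+1$, so $(z_k)$ lies in a bounded sublevel set by \ref{a9}; reflexivity of $\X$ in \ref{a1} then yields a weakly convergent subsequence $z_{k_j}\rightharpoonup z^\star$. The limit is admissible: $z^\star\in\dom$ by weak sequential closedness \ref{a1}, and $z^\star\in\X_n$ because the (closed, e.g.\ finite-dimensional) subspace of \ref{a6} is weakly sequentially closed. Weak sequential continuity of $\Ao_n$ in \ref{a7} gives $\Ao_n z_{k_j}\rightharpoonup \Ao_n z^\star$, and combining the weak lower semicontinuity of $\dist$ in its first argument from \ref{a-dist} with that of $\reg_n$ from \ref{a8} makes each summand weakly sequentially lower semicontinuous, so
\[
 \tik_{\data,\alpha,n}(z^\star)
 = \dist(\Ao_n z^\star,\data) + \alpha\,\reg_n(z^\star)
 \le \liminf_{j\to\infty}\tik_{\data,\alpha,n}(z_{k_j}) = m ,
\]
which shows $z^\star$ is a minimizer.

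For stability, fix a comparison point $w\in\X_n$ with $\tik_{\data,\alpha,n}(w)<\infty$, available from \ref{a9}. Optimality gives $\tik_{\data_k,\alpha,n}(\signal_k)\le\tik_{\data_k,\alpha,n}(w)$, and the right-hand side converges to $\tik_{\data,\alpha,n}(w)$ by the continuity of $\dist(\Ao_n w,\cdot)$ in \ref{a-dist}; hence $C:=\sup_k\tik_{\data_k,\alpha,n}(\signal_k)<\infty$. To transfer this bound to the unperturbed functional I use the quasi-triangle inequality and the continuity of $\dist(\cdot,\data)$ at $\data$ from \ref{a-dist}:
\[
 \dist(\Ao_n \signal_k,\data)
 \le \tau\,\dist(\Ao_n \signal_k,\data_k) + \tau\,\dist(\data_k,\data)
 \le \tau C + \tau\,\dist(\data_k,\data),
\]
where $\dist(\data_k,\data)\to 0$. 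Together with $\reg_n(\signal_k)\le C/\alpha$ this places $(\signal_k)$ in a bounded sublevel set of $\tik_{\data,\alpha,n}$ by \ref{a9}, giving a weak accumulation point. For any such point $\signal_{k_j}\rightharpoonup\signal^\star$, admissibility and $\Ao_n\signal_{k_j}\rightharpoonup\Ao_n\signal^\star$ follow as before, and joint weak lower semicontinuity of $\dist$ (now with $\data_{k_j}\to\data$) plus superadditivity of $\liminf$, optimality, and continuity from \ref{a-dist} yield, for every admissible $w$,
\[
 \tik_{\data,\alpha,n}(\signal^\star)
 \le \liminf_{j\to\infty}\tik_{\data_{k_j},\alpha,n}(\signal_{k_j})
 \le \lim_{j\to\infty}\tik_{\data_{k_j},\alpha,n}(w)
 = \tik_{\data,\alpha,n}(w),
\]
so $\signal^\star$ minimizes $\tik_{\data,\alpha,n}$.

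The existence argument and the admissibility of weak limits are routine. The main obstacle is the stability part, where the \emph{data} argument of $\dist$ moves along the sequence, so one cannot invoke coercivity of a single fixed functional. Two devices resolve this: the quasi-triangle inequality in \ref{a-dist} combined with continuity of $\dist(\cdot,\data)$ at $\data$, which bounds the unperturbed residual by the perturbed one up to a vanishing term and thus restores the coercivity bound \ref{a9}; and the \emph{joint} weak sequential lower semicontinuity of $\dist$ in \ref{a-dist}, which is exactly what is required when the first argument converges only weakly while the second converges strongly. Continuity of $\dist(\Ao_n w,\cdot)$ enters solely to pass the comparison value to the limit.
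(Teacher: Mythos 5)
Your proposal is correct and follows essentially the same route as the paper: existence via the direct method (coercivity from (\ref{a-c})/(\ref{a9}), reflexivity, weak sequential closedness, and wslsc of $\dist$ and the regularizer), and stability via the quasi-triangle inequality (\ref{a-dist1}) together with the continuity properties (\ref{a-dist3})--(\ref{a-dist4}) to restore a uniform sublevel-set bound. The only cosmetic differences are that you argue in the discrete setting and transfer to the continuous one (the paper does the reverse, noting the two assumption sets coincide), and that you write out the limit-identification step that the paper delegates to the standard argument of \cite[Theorem 3.23]{scherzer2009variational}, along the way correctly flagging that $\X_n$ must be weakly sequentially closed, a point the paper leaves implicit in (\ref{a6}).
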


\begin{proof}
Since (\ref{a1}), (\ref{a6})-(\ref{a9}) for  $\tik_{\data,\alpha,n}$ when $n \in \N$ are fixed give the same assumption as   (\ref{a1}), (\ref{a-reg})-(\ref{a-c}) for the non-discrete   counterpart $\tik_{\data,\alpha}$, it is sufficient to verify  (\ref{well-1}), (\ref{well-2})  for the latter. Existence of minimizers follows from  (\ref{a1}), (\ref{a-dist5}), (\ref{a-reg})-(\ref{a-c}), because these items imply that the $\tik_{\data,\alpha}$ is a wslsc coercive functional defined on a nonempty weakly sequentially closed subset of a reflexive Banach space. To show stability  one notes that  according to (\ref{a-dist1}) for all  $\signal \in \X$ we have
\begin{multline*}
\dist(\Ao \signal_k , \data) + \alpha \reg ( \signal_k)
\leq
\tau \bigl( \dist(\Ao \signal_k , \data_k) + \alpha \reg(\signal_k) \bigr) + \tau \dist(\data , \data_k)
\\\leq
\tau \bigl( \dist(\Ao \signal , \data_k) + \alpha \reg(\signal) \bigr) + \tau \dist(\data , \data_k) \,.
\end{multline*}
According to (\ref{a-dist3}), (\ref{a-dist4}), (\ref{a-c})  there exists $\signal \in \X$ such that the right hand side is bounded, which by  (\ref{a-c}) shows that $(\signal_k)_k$ has a weak accumulation point. Following the standard proof  \cite[Theorem 3.23]{scherzer2009variational} shows that weak accumulation points satisfy the claimed properties.
\end{proof}

In the following we write   $\signal_{\alpha,n}^{\delta}$ for minimizers of  $\tik_{\data^{\delta},\alpha,n}$. For $\data\in \Y$ we call  $\signal^\plus \in \argmin \{ \reg(\signal ) \mid \signal \in \X \wedge \Ao \signal =  \data  \}$ an $\reg$-minimizing solution of $\Ao \signal = \data$.

\begin{lemma}[Existence of $\reg$-minimizing solutions]
Let Assumption~\ref{ass-well} hold. For any $\data\in \Ao(\dom)$ an  $\reg$-minimizing solution of $\Ao \signal = \data$ exists. Likewise, if $n \in \N$ and $\data \in \Ao_n(\dom)$ an  $\reg_n$-minimizing solution of $\Ao_n \signal = \data$ exists.
\end{lemma}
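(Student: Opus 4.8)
The plan is to apply the direct method of the calculus of variations to the constrained problem defining an $\reg$-minimizing solution. Write $m \coloneqq \inf\{\reg(\signal) \mid \signal\in\dom \wedge \Ao\signal = \data\}$. Since $\data\in\Ao(\dom)$ the feasible set is nonempty, so $m$ is well defined; if $m=\infty$ every feasible point trivially attains the infimum and there is nothing to prove, so I would assume $m<\infty$ and select a minimizing sequence $(\signal_k)_{k\in\N}$ in the feasible set with $\reg(\signal_k)\to m$ and, without loss of generality, $\reg(\signal_k)\le m+1$.

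The key step, and the one place where a little care is needed, is to establish boundedness of $(\signal_k)_k$ so that reflexivity can be invoked. The coercivity hypothesis~(\ref{a-c}) is phrased through the sublevel sets of the full functional $\tik_{\data,\alpha}$ rather than of $\reg$ alone, so I would bridge the two as follows. Fix any $\alpha>0$. Because $\Ao\signal_k=\data$, assumption~(\ref{a-dist2}) gives $\dist(\Ao\signal_k,\data)=\dist(\data,\data)=0$, so with $C\coloneqq\alpha(m+1)$ one obtains
\[
\tik_{\data,\alpha}(\signal_k) = \dist(\Ao\signal_k,\data)+\alpha\reg(\signal_k) = \alpha\reg(\signal_k) \le C .
\]
Hence the whole minimizing sequence lies in the sublevel set $\{\signal\in\X \mid \tik_{\data,\alpha}(\signal)\le C\}$, which is bounded by~(\ref{a-c}).

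The remaining steps are then routine weak-compactness bookkeeping. By reflexivity of $\X$ from~(\ref{a1}) a subsequence satisfies $\signal_{k_j}\rightharpoonup\signal^\plus$; weak sequential closedness of $\dom$ from~(\ref{a1}) gives $\signal^\plus\in\dom$, and weak sequential continuity of $\Ao$ from~(\ref{a-a}) turns the constant sequence $\Ao\signal_{k_j}=\data$ into $\Ao\signal^\plus=\data$ by uniqueness of weak limits, so $\signal^\plus$ is feasible. Finally, weak sequential lower semicontinuity of $\reg$ from~(\ref{a-reg}) yields $\reg(\signal^\plus)\le\liminf_j\reg(\signal_{k_j})=m$, while feasibility forces $\reg(\signal^\plus)\ge m$; thus $\reg(\signal^\plus)=m$ and $\signal^\plus$ is an $\reg$-minimizing solution.

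For the discretized claim I would rerun the identical argument with $(\X,\Ao,\reg)$ replaced by $(\X_n,\Ao_n,\reg_n)$ and the feasible set taken inside $\X_n$, so that the coercivity~(\ref{a9}) is exactly the relevant one in the boundedness step, while~(\ref{a7}) and~(\ref{a8}) supply weak sequential continuity of $\Ao_n$ and weak sequential lower semicontinuity of $\reg_n$; here~(\ref{a6}) (together with~(\ref{a1})) is what guarantees the weak limit remains in $\X_n$. The only genuine obstacle is that single boundedness step, namely recognizing that the constraint $\Ao\signal=\data$ makes the data term vanish so that coercivity of the Tikhonov functional transfers to coercivity of the bare regularizer on the constraint set; everything else reduces to the standard lower-semicontinuity-plus-weak-compactness pattern already used for the existence theorem above.
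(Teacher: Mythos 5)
Your proof is correct and follows essentially the same route as the paper's: a minimizing sequence on the feasible set, the observation via (\ref{a-dist2}) that the constraint $\Ao\signal_k=\data$ makes the data term vanish so the sequence lies in a bounded sublevel set of $\tik_{\data,\alpha}$ by (\ref{a-c}), then reflexivity, weak closedness of $\dom$, weak sequential continuity of $\Ao$, and weak lower semicontinuity of $\reg$ to conclude, with the discrete case run analogously using (\ref{a6})--(\ref{a9}). Your explicit handling of the case $m=\infty$ is a minor extra care the paper's sublevel-set bound tacitly assumes away, but it does not change the argument.
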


\begin{proof}
Again is is sufficient the verify the claim for $\reg$-minimizing solution.
Because $\data\in \Ao(\dom)$, the set $\Ao^{-1}(\{ \data \} ) = \{ \signal  \in \X \mid \Ao \signal =  \data  \}$  is  non-empty. Hence we  can choose a sequence $(\signal_k)_{k \in \N} $ in $\Ao^{-1}(\{ \data \} )$ with $\reg(\signal_k)  \to \inf  \{ \reg(\signal ) \mid \signal \in \X \wedge \Ao \signal =  \data  \}$. Due to (\ref{a-dist2}),  $(\signal_k)_{k \in \N} $ is contained in  $\{\signal \in \X \mid \dist(\Ao(\signal), \data) + \alpha \reg(\signal) \leq C\}$ for some $C>0$ which   is bounded  according to (\ref{a-c}). By  (\ref{a1}) $\X$ is reflexive and therefore $(\signal_k)_{k \in \N} $ has a weak accumulation  point $\signal^\plus$. From (\ref{a1}), (\ref{a-a}), (\ref{a-reg}) we conclude that  $\signal^\plus$ is an $\reg$-minimizing solution of $\Ao \signal = \data$.
The case of $\reg_n$-minimizing solutions follows analogous.
\end{proof}

\subsection{Convergence}

Next we proof that discrete NETT converges as the noise level goes to zero  and the discretization as well as the regularization parameter are chosen properly. We write $\dom_{n,M} \coloneqq \{\signal \in \dom \cap\X_n \mid  \reg_n(\signal) \leq  M\}$ and formulate the following approximation conditions for obtaining convergence.

\begin{assumption}[Conditions for convergence]\mbox{}\\
 Element $\signal^\plus \in \dom$  satisfies the following for all $M>0$:
\begin{enumerate}[label=(C\arabic*), ref=C\arabic*]
\item\label{c1} $\exists (\zsignal_n)  \in \prod_{n\in \N} (\dom \cap \X_n)$ with  $\lambda_n \coloneqq \abs{\reg_n ( \zsignal_n  ) -  \reg(\signal^\plus)}
            \to 0$.
\item  \label{c2}
    $ \rho_n \coloneqq \sup_{\signal \in \dom_{n,M}}
    \lvert \reg_n(\signal ) - \reg(\signal)\rvert  \to 0 $.

\item\label{c3}
$\gamma_n \coloneqq \dist(\Ao_{n} \zsignal_n, \Ao\signal^\plus  ) \to 0$.

\item \label{c4}
   $ a_n \coloneqq \sup_{\signal \in \dom_{n,M}}
    \lvert \dist(\Ao_n\signal, \Ao\signal^\plus ) - \dist(\Ao\signal, \Ao\signal^\plus )\rvert \to 0 $.
         \end{enumerate}

\end{assumption}

 Conditions  (\ref{c1}) and (\ref{c3}) concerns the approximation of the true unknown $\signal$ with elements in the discretization space, that is compatible with the discretization of
 the forward operator and regularizer.     Conditions   (\ref{c2}) and    (\ref{c4})  are uniform approximation properties of the operator and the regularizer on $\reg_n$-bounded sets.

\begin{theorem}[Convergence]\label{thm:conv}
   Let  (\ref{a1})-(\ref{a9}) hold, $\data \in \Ao(\dom)$ and let $\signal^\plus $ be an $\reg$-minimizing solution of  $\Ao \signal = \data$ that satisfies  (\ref{c1})-(\ref{c4}). Moreover, suppose    $(\delta_k)_{k\in\N} \in (0, \infty)^\N$ converges to zero and
   $(\data_k)_{k\in\N} \in \Y^\N$ satisfies $\dist( \data, \data_k ) \leq \delta_k$.
  Choose $(\alpha_k)_{k \in \N}$ and $(n_k)_{k \in \N}$
  such that as $k \to \infty$ we have
    \begin{align} \label{eq:para1}
    & \alpha_k \to  0
    \\ \label{eq:para2}
      &  n_k \to   \infty
  \\ \label{eq:para3}
      & (\delta_k  + \dist(\Ao_{n_k} \zsignal_{n_k}, \data))/{\alpha_k}
     \to  0    \,.
     \end{align}
Then for   $\signal_k  \in \argmin \tik_{\data_k,\delta_k,n_k}$   the following hold:
  \begin{enumerate}[label=(\alph*)]
  \item \label{conv1} $(\signal_k)_{k\in\N}$ has a weakly convergent subsequence $(\signal_{\sigma(k)})_{k\in\N}$
  \item \label{conv2}  The weak limit of  $(\signal_{\sigma(k)})_{k\in\N}$ is an  $\reg$-minimizing solution of $\Ao \signal = \data$.
  \item \label{conv3} $\reg_{\sigma(k)}(\signal_{\sigma(k)}) \to \reg(\signal^\star)$, where $\signal^\star$ is the weak limit of  $(\signal_{\sigma(k)})_{k\in\N}$.
   \item \label{conv4}  If the   $\reg$-minimizing solution of $\Ao \signal = \data$  is unique, then
   $(\signal_k)_{k\in\N} \rightharpoonup \signal^\plus$.
  \end{enumerate}
\end{theorem}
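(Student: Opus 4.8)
The plan is to establish a uniform upper bound on the discretized Tikhonov functional evaluated at the minimizers, then exploit coercivity and weak sequential closedness to extract a weakly convergent subsequence, and finally identify its limit as an $\reg$-minimizing solution.

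\textbf{Step 1: Bounding the functional value.} First I would use the fact that each $\signal_k$ minimizes $\tik_{\data_k,\alpha_k,n_k}$, so that comparing with the approximating element $\zsignal_{n_k}$ from (\ref{c1}) gives
\begin{equation*}
\dist(\Ao_{n_k}\signal_k,\data_k) + \alpha_k \reg_{n_k}(\signal_k)
\leq
\dist(\Ao_{n_k}\zsignal_{n_k},\data_k) + \alpha_k \reg_{n_k}(\zsignal_{n_k}) \,.
\end{equation*}
Using the quasi-triangle inequality (\ref{a-dist1}) to split $\dist(\Ao_{n_k}\zsignal_{n_k},\data_k) \leq \tau\dist(\Ao_{n_k}\zsignal_{n_k},\data) + \tau\dist(\data,\data_k)$ and bounding $\dist(\data,\data_k)\leq\delta_k$, together with $\reg_{n_k}(\zsignal_{n_k}) \to \reg(\signal^\plus)$ from (\ref{c1}), I would derive that the right-hand side is controlled by $\alpha_k\reg(\signal^\plus)$ plus terms that, after dividing by $\alpha_k$, vanish under the parameter choice (\ref{eq:para3}). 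This yields $\limsup_k \reg_{n_k}(\signal_k) \leq \reg(\signal^\plus)$ and $\dist(\Ao_{n_k}\signal_k,\data_k)\to 0$.

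\textbf{Step 2: Extracting a weak limit and passing to the operator.} The $\reg_{n_k}$-bound places the $\signal_k$ eventually in a set $\dom_{n_k,M}$ for suitable $M$; assumption (\ref{a9}) (boundedness of sublevel sets) combined with reflexivity (\ref{a1}) gives a weakly convergent subsequence $\signal_{\sigma(k)}\rightharpoonup\signal^\star$, and weak sequential closedness of $\dom$ keeps $\signal^\star\in\dom$. This proves (\ref{conv1}). To show $\Ao\signal^\star=\data$, I would insert the error term from (\ref{c4}): $\dist(\Ao\signal_{\sigma(k)},\Ao\signal^\plus)$ is close to $\dist(\Ao_{n_{\sigma(k)}}\signal_{\sigma(k)},\Ao\signal^\plus)$ up to $a_{n_{\sigma(k)}}\to 0$, and the latter is small because $\dist(\Ao_{n_k}\signal_k,\data_k)\to 0$ and $\data_k\to\data=\Ao\signal^\plus$, using (\ref{a-dist1}), (\ref{a-dist3}), (\ref{a-dist4}). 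Then weak sequential continuity (\ref{a-a}) gives $\Ao\signal_{\sigma(k)}\rightharpoonup\Ao\signal^\star$, and wslsc of $\dist(\cdot,\data)$ (\ref{a-dist5}) forces $\dist(\Ao\signal^\star,\data)=0$, hence $\Ao\signal^\star=\data$ by (\ref{a-dist2}).

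\textbf{Step 3: Identifying the minimal-norm property and convergence of regularizer values.} To see $\signal^\star$ is $\reg$-minimizing, I would bridge the discrete and continuous regularizers using the uniform approximation (\ref{c2}): on $\dom_{n_k,M}$ we have $\lvert\reg_{n_k}(\signal)-\reg(\signal)\rvert\leq\rho_{n_k}\to 0$, so $\reg(\signal_{\sigma(k)}) \leq \reg_{n_{\sigma(k)}}(\signal_{\sigma(k)}) + \rho_{n_{\sigma(k)}}$. Combined with wslsc of $\reg$ (\ref{a-reg}) and the Step 1 bound, this gives $\reg(\signal^\star)\leq\liminf_k\reg(\signal_{\sigma(k)})\leq\limsup_k\reg_{n_{\sigma(k)}}(\signal_{\sigma(k)})\leq\reg(\signal^\plus)$, and since $\signal^\plus$ is $\reg$-minimizing and $\Ao\signal^\star=\data$, equality holds throughout, proving (\ref{conv2}) and (\ref{conv3}). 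Finally (\ref{conv4}) follows by the standard subsequence-of-subsequence argument: uniqueness of the limit forces the whole sequence to converge weakly.

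\textbf{Anticipated obstacle.} The delicate point is the interplay between the two error scales in Step 2: the discrete operator $\Ao_{n_k}$ only approximates $\Ao$ uniformly on $\reg_n$-bounded sets via (\ref{c4}), so I must first confirm the $\signal_k$ genuinely lie in such a bounded set (which requires the $\limsup$ bound on $\reg_{n_k}(\signal_k)$ from Step 1 to be strict enough to pick a uniform $M$), and only then can I transfer data-fidelity smallness from the discrete operator to the true operator. Keeping track of the triangle-inequality constant $\tau$ through these nested estimates, while ensuring every vanishing term is correctly divided by $\alpha_k$ or left in the additive fidelity part, is where the bookkeeping is most error-prone.
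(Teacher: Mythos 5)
Your proposal is correct and follows essentially the same route as the paper's proof: comparison with the approximating elements $\zsignal_{n_k}$ via minimality and the quasi-triangle inequality (\ref{a-dist1}), the resulting bounds $\dist(\Ao_{n_k}\signal_k,\data_k)\to 0$ and $\limsup_k \reg_{n_k}(\signal_k)\leq\reg(\signal^\plus)$, transfer from $\Ao_{n_k},\reg_{n_k}$ to $\Ao,\reg$ via (\ref{c2}), (\ref{c4}) on $\dom_{n_k,M}$, weak compactness from coercivity and reflexivity, the wslsc chain identifying the limit as $\reg$-minimizing and yielding (\ref{conv3}), and the subsequence-of-subsequence argument for (\ref{conv4}). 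Your explicit remark that the $\limsup$ bound on $\reg_{n_k}(\signal_k)$ is what licenses a uniform $M$ so that (\ref{c2}) and (\ref{c4}) apply is a point the paper leaves implicit, but it does not constitute a different approach.
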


\begin{proof}
    For convenience and some abuse of notation we use the abbreviations $\reg_{k} \coloneqq \reg_{n_k}$, $\Ao_k  \coloneqq  \Ao_{n_k}$, $a_k  \coloneqq  a_{n_k}$, $\zsignal_k \coloneqq  \zsignal_{n_k}$ and $\rho_k \coloneqq  \rho_{n_k}$. Because $\signal_k$ is a minimizer of  the discrete NETT functional $\tik_{\data_k,\delta_k,n_k}$ by (\ref{a-dist}) we have
\begin{multline*}
    \dist(\Ao_k \signal_k, \data_k) + \alpha_k\reg_k (\signal_k) \leq \dist(\Ao_k \zsignal_k, \data_k) + \alpha_k\reg_{k}(\zsignal_k) \\
    \leq \tau\dist(\Ao_k \zsignal_k, \data) + \tau \dist(\data, \data_k) + \alpha_k\reg_{k}(\zsignal_k)
    =
    \tau \dist(\Ao_k \zsignal_k, \data) + \tau \delta_k + \alpha_k\reg_{k}(\zsignal_k)
\end{multline*}
According to (\ref{c1}), \eqref{eq:para1}, we get
\begin{align} \label{eq:conv1}
       \dist(\Ao_k \signal_k, \data_k)
       & \leq  \tau ( \dist(\Ao_k \zsignal_k, \data) +  \delta_k ) \,,
\\ \label{eq:conv2}
        \reg_k(\signal_k)
        &\leq
        \tau \cdot  \frac{  \dist(\Ao_k \zsignal_k, \data_k) +  \delta_k}{\alpha_k} +\reg_{k}(\zsignal_k) \,.
\end{align}
According to (\ref{c1}), (\ref{c3}), \eqref{eq:para2}, \eqref{eq:para3} the right hand side in \eqref{eq:conv1} converges  to zero and the right hand side in \eqref{eq:conv2}   to $\reg(\signal_\plus)$.   Together with (\ref{c2}) we obtain  $\reg_k(\signal) \leq  \reg_k(\signal_k) + \rho_k \to \reg(\signal_\plus)$ and $\dist(\Ao \signal_k, \data)  \leq  \tau \dist(\Ao_k \signal_k, \data_k) + \tau \delta_k \leq \tau  \dist(\Ao \signal_k, \data) + \tau a_k + \tau \delta_k \to 0$.  This shows that $(\dist(\Ao \signal_k, \data) + \reg(\signal_k))_{k \in\N }$ is bounded and by (\ref{a1}), (\ref{a9}) there exists a weakly convergent  subsequence $(\signal_{\sigma(k)})_{k\in\N}$. We denote the  weak limit by  $\signal^\star\in \X$. From  (\ref{a-dist}), (\ref{a-a}) we obtain $ \Ao \signal= \data$.
The weak  lower semi-continuity of $\reg$ assumed in  (\ref{a-reg}) shows
\begin{multline*}
    \reg(\signal^\star) \leq \liminf_{k} \reg(\signal_{\sigma(k)}) \leq \limsup_k \reg(\signal_{\sigma(k)}) \\ \leq \limsup_{k} (\reg_{\sigma(k)}(\signal_{\sigma(k)}) + \rho_k ) \leq \reg(\signal^\plus) \,.
\end{multline*}
Consequently, $\signal^\star$ is an $\reg$-minimizing solution of $\Ao \signal = \data$ and $\reg(\signal_{\sigma(k)}) \to  \reg(\signal^\star)$.
If the $\reg$-minimizing solution is unique then $\signal^\plus$ is the only weak accumulation point of $(\signal_k)_{k\in\N}$ which concludes the proof.
\end{proof}

\subsection{Convergence rates}

Next we derive quantitative error estimates (convergence rates) in terms of the  absolute Bregman distance. Recall that a function $\reg \colon \X \to [0, \infty]$ is G\^{a}teaux differentiable at some $\signal^\star \in \X$ if the directional derivative $\reg^\prime(\signal^\star )(h) \coloneqq (\reg(\signal^\star +  t h) - \reg   (\signal^\star) )/t$ exist for every $h \in \X$. We denote by  $\reg^\prime(\signal^\star )$ the G\^{a}teaux derivative of $\reg$ at $\signal$. In  \cite{li2020nett} we introduced the absolute Bregman distance  $\Breg_\reg(\edot,\signal^\star )\colon \X \to [0,\infty]$ of a G\^{a}teaux differentiable functional $\reg \colon \X \to [0, \infty]$   at $\signal^\star \in \X$ with respect to $\reg$  defined by
\begin{equation}\label{eq:absbreg}
    \forall \signal\in \X \colon \Breg_\reg(\signal, \signal^\star ) \coloneqq \abs{\reg(\signal) - \reg(\signal^\star ) - \reg^\prime(\signal^\star ) (\signal - \signal^\star ) }\,.
\end{equation}

We write $\sup_{\data^\delta} H(\data^\delta) \coloneqq \sup\{ H(\data^\delta) \mid \data^\delta \in \X \wedge \dist(\Ao\signal^\plus,\data^\delta) \leq \delta \}$.   Convergence rates in terms of the Bregman distance are derived under a smoothness assumption on the true solution in the form of a certina variational inequality. More precisely we assume the following:

\begin{assumption}[Conditions for convergence rates]\mbox{}\\
 Element $\signal^\plus \in \dom$  satisfies the following for all $M, \delta >0$:
\begin{enumerate}[label=(R\arabic*), ref=R\arabic*]
        \item\label{r1}
        Items (\ref{c1}), (\ref{c2}) hold.
       \item\label{r2}
       $\gamma_{n,\delta} \coloneqq \sup_{\data^\delta}
        \abs{\dist(\Ao_{n} \zsignal_n, \data^\delta  )  -
        \dist(\Ao\signal^\plus, \data^\delta  )}\to 0$.
        \item\label{r3}
         $ a_{n,\delta} \coloneqq
         \sup_{\data^\delta} \sup_{\signal \in \dom_{n,M}}
    \lvert \dist(\Ao_n\signal, \data^\delta ) - \dist(\Ao\signal, \data^\delta )\rvert \to 0 $.

\item\label{r4}  $\reg$ is G\^{a}teaux differentiable at $\signal^\plus$
\item\label{r5}  There  exist  a concave, continuous, strictly increasing $\phi\colon [0,\infty)\to[0,\infty)$ with $\phi(0)=0$ and  $\epsilon, \beta>0$ such that for all $\signal\in \X$
\begin{equation*}
    \abs{\reg(\signal)-\reg(\signal^\plus)} \leq \epsilon
    \Rightarrow
    \beta\Breg_\reg(\signal,\signal^\plus) \leq \reg(\signal) - \reg(\signal^\plus) + \phi\bigl(\dist(\Ao \signal, \Ao \signal^\plus)\bigr) \,.
\end{equation*}
\end{enumerate}
\end{assumption}

According to (\ref{r5}) the inverse function $\phi^{-1}\colon [0,\infty)\to [0,\infty)$ exists and  is convex. We denote by $\phi^{-\ast}(s) \coloneqq \sup \{ s \, t - \phi^{-1}(t) \mid t  \geq 0 \} $ its  Fenchel conjugate.

\begin{proposition}[Error estimates]\label{prop:rate1}
Let  $\data\in \Ao(\dom)$ and $\signal^\plus$ be an $\reg$-minimizing solution of $\Ao \signal = \data$ such that  (\ref{a1})-(\ref{a9}) and (\ref{r1})-(\ref{r5})  are satisfied.  For   $\data^\delta \in \Y$ with $\dist(\data,\data^\delta) \leq \delta$ let  $\signal_{\alpha,n}^{\delta} \in \argmin \tik_{\data^\delta,\alpha,n}$. Then for sufficient small $\delta , \alpha >0$ and sufficiently large $n \in \N$, we have the error estimate
\begin{equation}\label{eq:rate}
    \Breg_\reg(\signal_{\alpha,n}^{\delta}, \signal^\plus) \leq   \frac{a_{n, \delta} + \gamma_{n,\delta} + \delta}{\alpha} +   \rho_n + \lambda_n   +   \phi(\tau\delta) + \frac{\phi^{-\ast}(\tau\alpha)}{\tau \alpha}   \,.
 \end{equation}	
\end{proposition}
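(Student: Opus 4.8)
The plan is to run the classical variational-rate argument, but applied to the \emph{discrete} functional, so that the four discretization errors $\lambda_n,\rho_n,\gamma_{n,\delta},a_{n,\delta}$ enter additively alongside the usual noise/penalty trade-off. I would begin from the minimality of $\signal_{\alpha,n}^{\delta}$ for $\tik_{\data^\delta,\alpha,n}$, testing against the admissible competitor $\zsignal_n\in\dom\cap\X_n$ furnished by $(\ref{c1})$, which gives
\[
\dist(\Ao_n\signal_{\alpha,n}^{\delta},\data^\delta)+\alpha\reg_n(\signal_{\alpha,n}^{\delta})
\leq
\dist(\Ao_n\zsignal_n,\data^\delta)+\alpha\reg_n(\zsignal_n).
\]
On the left I would pass from $\Ao_n,\reg_n$ to $\Ao,\reg$ using the uniform bounds $(\ref{r3})$ and $(\ref{r1})$ (losing $a_{n,\delta}$ and $\alpha\rho_n$), which is legitimate once $\signal_{\alpha,n}^{\delta}\in\dom_{n,M}$ for a suitably fixed $M$; on the right I would use $(\ref{r2})$ together with the noise bound $\dist(\Ao\signal^\plus,\data^\delta)\leq\delta$ and $(\ref{r1})$ to obtain $\dist(\Ao_n\zsignal_n,\data^\delta)+\alpha\reg_n(\zsignal_n)\leq\delta+\gamma_{n,\delta}+\alpha(\reg(\signal^\plus)+\lambda_n)$. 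Rearranging produces the fully continuous inequality
\[
\dist(\Ao\signal_{\alpha,n}^{\delta},\data^\delta)+\alpha\bigl(\reg(\signal_{\alpha,n}^{\delta})-\reg(\signal^\plus)\bigr)
\leq
\delta+a_{n,\delta}+\gamma_{n,\delta}+\alpha(\rho_n+\lambda_n).
\]

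Next I would invoke the variational inequality $(\ref{r5})$. Dividing the last display by $\alpha$ shows that $\reg(\signal_{\alpha,n}^{\delta})-\reg(\signal^\plus)$ is bounded above by a quantity tending to zero in the prescribed regime, while the convergence theorem proved above gives $\reg(\signal_{\alpha,n}^{\delta})\to\reg(\signal^\plus)$; hence the admissibility threshold $\lvert\reg(\signal_{\alpha,n}^{\delta})-\reg(\signal^\plus)\rvert\leq\epsilon$ is met for $\delta,\alpha$ small and $n$ large, which is exactly what ``sufficiently small/large'' means in the statement. Applying $(\ref{r5})$, multiplying by $\alpha$, and substituting the bound for $\alpha(\reg(\signal_{\alpha,n}^{\delta})-\reg(\signal^\plus))$ leaves the two genuinely analytic terms $-\dist(\Ao\signal_{\alpha,n}^{\delta},\data^\delta)$ and $\alpha\,\phi\bigl(\dist(\Ao\signal_{\alpha,n}^{\delta},\Ao\signal^\plus)\bigr)$ to be reconciled.

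The two manipulations that close the argument are as follows. First, by the quasi-triangle inequality $(\ref{a-dist1})$ through the intermediate point $\data^\delta$ and the subadditivity and monotonicity of the concave $\phi$ with $\phi(0)=0$, I would split
\[
\phi\bigl(\dist(\Ao\signal_{\alpha,n}^{\delta},\Ao\signal^\plus)\bigr)
\leq
\phi\bigl(\tau\dist(\Ao\signal_{\alpha,n}^{\delta},\data^\delta)\bigr)+\phi(\tau\delta),
\]
which isolates the term $\phi(\tau\delta)$. Second, with $u\coloneqq\dist(\Ao\signal_{\alpha,n}^{\delta},\data^\delta)$ it remains to bound $\alpha\,\phi(\tau u)-u$; setting $v\coloneqq\phi(\tau u)$, so that $\tau u=\phi^{-1}(v)$, and using the definition of the Fenchel conjugate yields $\alpha\,\phi(\tau u)-u=\frac{1}{\tau}\bigl(\tau\alpha v-\phi^{-1}(v)\bigr)\leq\frac{1}{\tau}\phi^{-\ast}(\tau\alpha)$. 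Inserting both estimates into the chain and dividing by $\alpha$ (and by the constant $\beta$ of $(\ref{r5})$, normalized to $1$) reproduces the claimed bound \eqref{eq:rate}.

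I expect the Fenchel/Young step to be the real crux: it is the only place where the exact combination $\phi^{-\ast}(\tau\alpha)/(\tau\alpha)$ is manufactured, and it hinges on correctly turning the concavity of $\phi$ into convexity of $\phi^{-1}$ before conjugating. The remaining difficulties are bookkeeping rather than conceptual --- choosing $M$ so that $\signal_{\alpha,n}^{\delta}$ and $\zsignal_n$ lie in $\dom_{n,M}$ uniformly in the relevant regime, tracking the argument order of the (possibly asymmetric) distance $\dist$ when applying $(\ref{a-dist1})$, and confirming that the threshold $\epsilon$ of $(\ref{r5})$ is eventually reached, for which I would again lean on the convergence theorem.
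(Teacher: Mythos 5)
Your proposal is correct and follows essentially the same route as the paper's proof: minimality against the competitor $\zsignal_n$, conversion of the discrete terms via $\lambda_n,\rho_n,\gamma_{n,\delta},a_{n,\delta}$, the convergence theorem to reach the $\epsilon$-threshold of (\ref{r5}), the concavity/quasi-triangle split producing $\phi(\tau\delta)$, and the Fenchel--Young step $\alpha\phi(\tau u)-u\leq\tau^{-1}\phi^{-\ast}(\tau\alpha)$. The only difference is cosmetic --- you assemble the fully continuous inequality before invoking (\ref{r5}), whereas the paper applies (\ref{r5}) first and substitutes afterwards --- and your handling of $\beta$ matches the paper's implicit normalization.
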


\begin{proof}
We have
  $  \dist(\Ao_n\signal_{\alpha,n}^{\delta}, \data^\delta) + \alpha\reg_n(\signal_{\alpha,n}^{\delta}) \leq \dist(\Ao_n \zsignal_n, \data^\delta) + \alpha \reg_n(\zsignal_n) $. According to Theorem~\ref{thm:conv} we can assume  $\abs{\reg(\signal_{\alpha,n}^{\delta})-\reg(\signal^\plus)} < \epsilon$ and with   (\ref{r5}) we obtain
\begin{align*}
    &\alpha \beta \Breg_\reg(\signal_{\alpha,n}^{\delta}, \signal^\plus)
       \\ & \leq
    \alpha\reg(\signal_{\alpha,n}^{\delta}) - \alpha \reg(\signal^\plus) + \alpha\phi(\dist(\Ao \signal_{\alpha,n}^{\delta}, \data))
    \\ & \leq
    \alpha \reg_n(\signal_{\alpha,n}^{\delta}) - \alpha \reg(\zsignal_n)
    + \alpha\rho_n + \alpha\lambda_n
    + \alpha\phi(\dist(\Ao \signal_{\alpha,n}^{\delta}, \data ))
    \\ &  \leq
     \dist(\Ao_n \zsignal_n, \data^\delta) - \dist(\Ao_n\signal_{\alpha,n}^{\delta}, \data^\delta)
    + \alpha\rho_n + \alpha\lambda_n
    + \alpha\phi(\dist(\Ao \signal_{\alpha,n}^{\delta}, \data ))
    \\ &  \leq
     \delta - \dist(\Ao\signal_{\alpha,n}^{\delta}, \data^\delta)
    + \gamma_{n,\delta} + a_{n,\delta}
    + \alpha\rho_n + \alpha\lambda_n
    + \alpha\phi(\tau\delta) + \alpha\phi(\tau\dist(\Ao \signal_{\alpha,n}^{\delta}, \data^\delta ))
    \\ &  \leq
     \delta
    + \gamma_{n,\delta} + a_{n,\delta}
    + \alpha\rho_n + \alpha\lambda_n
    + \alpha\phi(\tau\delta) + \tau^{-1} \phi^{-*}(\tau\delta ) \,.
\end{align*}
where we used Young's inequality $\alpha\phi(\tau t) \leq t + \tau^{-1}\phi^{-\ast}(\tau\alpha)$ for the last step.
\end{proof}

\begin{remark}
The error estimate \eqref{eq:rate} includes  the approximation quality of the discrete or inexact forward  operator $\Ao_n$ and the discrete or inexact regularizer  $\reg_n$ described by  $a_{n,\delta}$ and $\rho_n$,  respectively.
What might be unexpected at first is the inclusion of two new parameters $\lambda_n$ and $\gamma_{n,\delta}$. These factors both arise from the approximation of $\X$ by the finite dimensional spaces $\X_n$, where $\gamma_{n,\delta}$ reflects approximation accuracy in the image of the operator $\Ao$ and $\lambda_n$ approximation accuracy with respect to the true regularization functional $\reg$.   Note that in the case where the forward  operator, the  regularizer and the solution space $\X$ are given precisely,  we have $a_{n,\delta} = \gamma_{n,\delta} = \lambda_n =  \rho_n = 0$. In this particular case we recover the estimate derived for the NETT in \cite{li2020nett}.
\end{remark}

\begin{theorem}[Convergence rates]\label{thm:rate1}
Let the assumptions of Proposition~\ref{prop:rate1}  hold and consider the parameter choice rule $\alpha(\delta) \asymp \delta/\phi(\delta)$ and let the approximation errors satisfy $a_{n, \delta} + \gamma_{n,\delta} = \mathcal O(\delta)$, $\rho_n + \lambda_n = \mathcal O(\phi(\tau \delta))$. Then we have the convergence rate
\begin{equation}\label{eq:rate-a}
	 \Breg_\reg(\signal_{\alpha(\delta),n(\delta)}^{\delta}, \signal^\plus)
	 = \mathcal O(\phi(\tau \delta)) \,.
\end{equation}
\end{theorem}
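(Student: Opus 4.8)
The plan is to substitute the a priori parameter choice $\alpha(\delta)\asymp\delta/\phi(\delta)$ together with the assumed decay of the approximation errors into the error estimate \eqref{eq:rate} of Proposition~\ref{prop:rate1}, valid for $\delta,\alpha$ small and $n$ large, and then to check that each of its five summands is of order $\mathcal{O}(\phi(\tau\delta))$. The recurring tool will be that, since $\phi$ is concave with $\phi(0)=0$ and $\tau\geq 1$, the chord slope $t\mapsto\phi(t)/t$ is non-increasing; in particular $\phi(\delta)\leq\phi(\tau\delta)\leq\tau\,\phi(\delta)$, so that $\phi(\delta)$ and $\phi(\tau\delta)$ may be used interchangeably up to the fixed factor $\tau$.

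First I would dispose of the three elementary terms. For the data--discretization term, the hypothesis $a_{n,\delta}+\gamma_{n,\delta}=\mathcal{O}(\delta)$ gives the numerator $a_{n,\delta}+\gamma_{n,\delta}+\delta=\mathcal{O}(\delta)$; dividing by $\alpha\asymp\delta/\phi(\delta)$ produces $\mathcal{O}(\phi(\delta))=\mathcal{O}(\phi(\tau\delta))$. The regularizer term $\rho_n+\lambda_n$ is $\mathcal{O}(\phi(\tau\delta))$ by assumption, and the bare summand $\phi(\tau\delta)$ is already of the desired order. Thus everything except the Fenchel conjugate term is settled.

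The remaining summand $\phi^{-\ast}(\tau\alpha)/(\tau\alpha)$ is the genuinely delicate one; it is exactly the quantity that already governs the rate in the non-discrete NETT analysis, so I would follow the argument of \cite{li2020nett}. Writing the Fenchel conjugate through the substitution $t=\phi(r)$ (legitimate since $\phi$ is a strictly increasing bijection onto its range) gives $\phi^{-\ast}(s)=\sup_{r\geq 0}\bigl(s\,\phi(r)-r\bigr)$, hence $\phi^{-\ast}(\tau\alpha)/(\tau\alpha)=\sup_{r\geq 0}\bigl(\phi(r)-r/(\tau\alpha)\bigr)$. By concavity this supremum is attained at the point $r^\ast$ with $\phi'(r^\ast)=1/(\tau\alpha)$ and is bounded by $\phi(r^\ast)$. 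Since $1/(\tau\alpha)\asymp\phi(\delta)/(\tau\delta)$ is comparable to the chord slope $\phi(\tau\delta)/(\tau\delta)$, and $\phi'$ is non-increasing with $\phi'(t)\leq\phi(t)/t$, the identity $\phi'(r^\ast)=1/(\tau\alpha)$ forces $r^\ast$ to lie within a fixed multiple of $\tau\delta$; monotonicity of $\phi$ and the slope estimate then give $\phi(r^\ast)=\mathcal{O}(\phi(\tau\delta))$.

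The hard part will be precisely this last estimate. One cannot bound $\sup_{r\geq 0}(\phi(r)-r/(\tau\alpha))$ by the naive chord inequality $\phi(r)\leq(\phi(\tau\delta)/(\tau\delta))\,r$, which is tight only at $r=\tau\delta$ and diverges when used for large $r$; the concavity of $\phi$ must be exploited through the optimality condition (equivalently, a Young-type splitting about the maximizer) to confine $r^\ast$ to order $\tau\delta$. Once all five summands are bounded by $\mathcal{O}(\phi(\tau\delta))$, adding them yields \eqref{eq:rate-a}. All constants generated in the process depend only on $\tau$ and on $\phi$ and hence do not affect the asserted asymptotic rate.
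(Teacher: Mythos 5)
Your overall route is the paper's: substitute $\alpha(\delta)\asymp\delta/\phi(\delta)$ into the estimate \eqref{eq:rate} and check that each of the five summands is $\mathcal O(\phi(\tau\delta))$; your treatment of the first four summands is correct. The gap sits exactly where you located it, and your proposed repair does not work at the stated level of generality. From $\phi'(r^\ast)=1/(\tau\alpha)$ together with $\phi'$ non-increasing and $\phi'(t)\leq\phi(t)/t$ you cannot conclude $r^\ast=\mathcal O(\tau\delta)$: those facts only give $\phi'(t)\leq\phi(\tau\delta)/(\tau\delta)$ for $t\geq\tau\delta$, whereas the optimality condition gives $\phi'(r^\ast)\geq c\,\phi(\tau\delta)/(\tau\delta)$ with a constant $c$ hidden in $\asymp$, possibly $c<1$. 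Since $\phi$ is only assumed concave (not strictly concave, and not differentiable, so the optimality condition itself needs superdifferentials and attainment of the supremum is not guaranteed), $\phi'$ may be constant on long stretches and the level set $\{t:\phi'(t)\geq c\,\phi(\tau\delta)/(\tau\delta)\}$ need not be contained in any fixed multiple of $[0,\tau\delta]$. Concretely, take $\phi(t)=t$ on $[0,T]$, bending concavely afterwards: then $\phi'\equiv1$ on $[0,T]$, and if the hidden constant makes $1/(\tau\alpha)<1$, every maximizer satisfies $r^\ast\geq T$ and indeed $\sup_{r\geq0}\bigl(\phi(r)-r/(\tau\alpha)\bigr)\geq T\bigl(1-1/(\tau\alpha)\bigr)$, which stays bounded away from zero while $\phi(\tau\delta)\to0$. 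Your localization does hold for H\"older-type $\phi(t)=ct^\theta$ (there $r^\ast\asymp\delta$ by direct computation), which is why it looks plausible, but the theorem assumes only concavity.

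The irony is that the tool you rejected is the one that closes the argument, and it is what the paper's one-line proof tacitly uses. The chord inequality in its \emph{affine} form is valid for all $r\geq0$, not only $r\geq\tau\delta$: monotonicity gives $\phi(r)\leq\phi(\tau\delta)$ for $r\leq\tau\delta$, and the homogeneous chord bound gives $\phi(r)\leq\frac{\phi(\tau\delta)}{\tau\delta}r$ for $r\geq\tau\delta$, hence $\phi(r)\leq\phi(\tau\delta)+\frac{\phi(\tau\delta)}{\tau\delta}\,r$ everywhere. Consequently, whenever the parameter choice satisfies $\frac{1}{\tau\alpha}\geq\frac{\phi(\tau\delta)}{\tau\delta}$,
\begin{equation*}
\frac{\phi^{-\ast}(\tau\alpha)}{\tau\alpha}
=\sup_{r\geq0}\Bigl(\phi(r)-\frac{r}{\tau\alpha}\Bigr)
\leq\phi(\tau\delta)+\sup_{r\geq0}\,r\Bigl(\frac{\phi(\tau\delta)}{\tau\delta}-\frac{1}{\tau\alpha}\Bigr)
=\phi(\tau\delta)\,,
\end{equation*}
the feared divergence at large $r$ being cancelled by $-r/(\tau\alpha)$ rather than requiring the chord bound to be globally tight. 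Equivalently, $\phi^{-\ast}\bigl(\tau\delta/\phi(\tau\delta)\bigr)\leq\tau\delta$, which is precisely the boundedness of $\phi^{-\ast}\bigl(\tau\delta/\phi(\tau\delta)\bigr)/\delta$ that the paper's proof invokes; and since $\phi^{-\ast}$ is convex with $\phi^{-\ast}(0)=0$, the map $s\mapsto\phi^{-\ast}(s)/s$ is non-decreasing, so the bound persists for any smaller $\alpha$. As your counterexample above shows, for general concave $\phi$ the multiplicative constant in $\alpha\asymp\delta/\phi(\delta)$ genuinely matters (a point the paper also glosses over); with the one-sided reading $\alpha\leq\delta/\phi(\tau\delta)$ the affine chord bound completes the proof, while your maximizer-localization step cannot be salvaged without additional assumptions on $\phi$.
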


\begin{proof}
Noting that  $\phi^{-*} (\tau \delta / \phi(\tau \delta))/\delta$ remains bounded as  $\delta \to 0$, this directly follows from  Proposition~\ref{prop:rate1}
\end{proof}

Next we verify that  a variational inequality of the form (\ref{r5}) is satisfied with $\phi(t) = c \sqrt{t}$ under a typical source like condition.

\begin{lemma}[Variational inequality under  source condition]
\label{lem:source}
Let  $\reg$, $\Ao$ be  G\^{a}teaux differentiable at $\signal^\plus \in \X$, consider the distance measure $\dist(\data_1, \data_2 ) = \| \data_1 - \data_2  \|^2$  and assume there exist $\eta \in \X^\star$ and $c_1,c_2,\epsilon>0$ with $c_1\norm{\eta}<1$ such that for all $\signal\in \X$ with $\abs{\reg(\signal)-\reg(\signal^\plus)}\leq \epsilon$ we have
\begin{equation}\label{eq:porp:conv_as}
    \begin{aligned}
       & \reg^\prime(\signal^\plus) = \Ao^\prime(\signal^\plus)^\ast\eta   \\
      &\norm{\Ao \signal - \Ao \signal^\plus - \Ao^\prime(\signal^\plus)(\signal-\signal^\plus)} \leq c_1\Breg_\reg(\signal,\signal^\plus)
      \\
       & \reg(\signal^\plus) - \reg(\signal) \leq c_2 \norm{\Ao \signal - \Ao \signal^\plus}
       \,.
    \end{aligned}
\end{equation}
Then (\ref{r5}) holds with    $\phi(t) = (\norm{\eta}+2c_2)\sqrt{t}$ and $\beta=1-c_1\norm{\eta}$.
\end{lemma}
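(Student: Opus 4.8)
The plan is to verify the variational inequality (\ref{r5}) directly from the three hypotheses collected in \eqref{eq:porp:conv_as}. Since $\dist(\data_1,\data_2)=\norm{\data_1-\data_2}^2$, the claimed multiplier satisfies $\phi(\dist(\Ao\signal,\Ao\signal^\plus))=(\norm{\eta}+2c_2)\,\norm{\Ao\signal-\Ao\signal^\plus}$, so the goal is to establish, for every $\signal$ with $\abs{\reg(\signal)-\reg(\signal^\plus)}\le\epsilon$, the bound
\[
\beta\,\Breg_\reg(\signal,\signal^\plus)\le \reg(\signal)-\reg(\signal^\plus)+(\norm{\eta}+2c_2)\,\norm{\Ao\signal-\Ao\signal^\plus}\,.
\]
First I would use the source condition $\reg^\prime(\signal^\plus)=\Ao^\prime(\signal^\plus)^\ast\eta$ to rewrite the linear part of the Bregman distance as $\reg^\prime(\signal^\plus)(\signal-\signal^\plus)=\innerprod{\eta,\Ao^\prime(\signal^\plus)(\signal-\signal^\plus)}$, so that $\Breg_\reg(\signal,\signal^\plus)=\abs{\reg(\signal)-\reg(\signal^\plus)-\innerprod{\eta,\Ao^\prime(\signal^\plus)(\signal-\signal^\plus)}}$.

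Next I would split off the linearization error $e\coloneqq \Ao\signal-\Ao\signal^\plus-\Ao^\prime(\signal^\plus)(\signal-\signal^\plus)$, writing $\innerprod{\eta,\Ao^\prime(\signal^\plus)(\signal-\signal^\plus)}=\innerprod{\eta,\Ao\signal-\Ao\signal^\plus}-\innerprod{\eta,e}$. The triangle inequality together with the second hypothesis $\norm{e}\le c_1\Breg_\reg(\signal,\signal^\plus)$ gives $\abs{\innerprod{\eta,e}}\le c_1\norm{\eta}\,\Breg_\reg(\signal,\signal^\plus)$, and because $c_1\norm{\eta}<1$ this term can be moved to the left-hand side. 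This yields
\[
\beta\,\Breg_\reg(\signal,\signal^\plus)\le \abs{\reg(\signal)-\reg(\signal^\plus)-\innerprod{\eta,\Ao\signal-\Ao\signal^\plus}}
\]
with $\beta=1-c_1\norm{\eta}>0$, exactly the asserted coefficient.

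Finally I would remove the absolute value by a sign distinction on $w\coloneqq \reg(\signal)-\reg(\signal^\plus)-\innerprod{\eta,\Ao\signal-\Ao\signal^\plus}$. If $w\ge 0$, then $w\le \reg(\signal)-\reg(\signal^\plus)+\norm{\eta}\norm{\Ao\signal-\Ao\signal^\plus}$, using $-\innerprod{\eta,\Ao\signal-\Ao\signal^\plus}\le\norm{\eta}\norm{\Ao\signal-\Ao\signal^\plus}$. If $w<0$, then $\abs{w}=\reg(\signal^\plus)-\reg(\signal)+\innerprod{\eta,\Ao\signal-\Ao\signal^\plus}\le(c_2+\norm{\eta})\norm{\Ao\signal-\Ao\signal^\plus}$ by the third hypothesis; combining this with the lower bound $\reg(\signal)-\reg(\signal^\plus)\ge -c_2\norm{\Ao\signal-\Ao\signal^\plus}$ (the third hypothesis again) shows $\abs{w}\le \reg(\signal)-\reg(\signal^\plus)+(\norm{\eta}+2c_2)\norm{\Ao\signal-\Ao\signal^\plus}$. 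In both cases the target inequality holds. It then remains to observe that $\phi(t)=(\norm{\eta}+2c_2)\sqrt{t}$ is concave, continuous, strictly increasing and vanishes at $0$ (as $\norm{\eta}+2c_2>0$), so (\ref{r5}) is satisfied with the stated $\phi$ and $\beta$. The only delicate point is the self-referential occurrence of $\Breg_\reg(\signal,\signal^\plus)$ on both sides of the estimate; this is precisely what the smallness assumption $c_1\norm{\eta}<1$ resolves, letting the error term be absorbed on the left and producing the factor $\beta=1-c_1\norm{\eta}$.
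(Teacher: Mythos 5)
Your proof is correct and follows essentially the same route as the paper: both rewrite $\reg^\prime(\signal^\plus)$ via the source condition, split off the linearization error with the Cauchy--Schwarz inequality and the bound $\norm{\Ao \signal - \Ao \signal^\plus - \Ao^\prime(\signal^\plus)(\signal-\signal^\plus)} \leq c_1\Breg_\reg(\signal,\signal^\plus)$, absorb the resulting self-referential Bregman term using $c_1\norm{\eta}<1$ to obtain $\beta = 1-c_1\norm{\eta}$, and invoke the third hypothesis in a sign case distinction to produce the $2c_2$ contribution. The only (immaterial) difference is that you perform the case distinction on $w = \reg(\signal)-\reg(\signal^\plus)-\innerprod{\eta,\Ao\signal-\Ao\signal^\plus}$, whereas the paper first applies the triangle inequality to the Bregman distance and then splits on the sign of $\reg(\signal)-\reg(\signal^\plus)$.
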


\begin{proof}
Let $\signal\in \X$ with $\abs{\reg(\signal)-\reg(\signal^\plus)}\leq \epsilon$. Using the Cauchy-Schwarz inequality and equation~\eqref{eq:porp:conv_as}, we can estimate
\begin{align*}
    \abs{\innerprod{\reg^\prime(\signal^\plus), \signal-\signal^\plus}} \leq& \norm{\Ao^\prime(\signal^\plus) (\signal-\signal^\plus)}\norm{\eta}\\ \leq& \norm{\Ao \signal - \Ao \signal^\plus}\norm{\eta} + \norm{\Ao \signal - \Ao \signal^\plus - \Ao^\prime(\signal^\plus)(\signal-\signal^\plus)}\norm{\eta} \\
    \leq& \norm{\Ao \signal - \Ao \signal^\plus}\norm{\eta} +  c_1\norm{\eta}\Breg_\reg(\signal,\signal^\plus)\,.
\end{align*}
Additionally, if $\reg(\signal)\geq\reg(\signal^\plus)$, we have $\abs{\reg(\signal)-\reg(\signal^\plus)}=\reg(\signal) - \reg(\signal^\plus)$, and on the other hand if $\reg(\signal)<\reg(\signal^\plus)$, we have $\abs{\reg(\signal)-\reg(\signal^\plus)}\leq \reg(\signal)-\reg(\signal^\plus) + 2(\reg(\signal^\plus)-\reg(\signal)) \leq \reg(\signal)-\reg(\signal^\plus) + 2c_2 \norm{\Ao \signal - \Ao \signal^\plus}$.
Putting this together we get
\begin{multline*}
    \Breg_\reg(\signal,\signal^\plus) \leq \abs{\reg(\signal)-\reg(\signal^\plus)} + \abs{\innerprod{\reg^\prime(\signal^\plus), \signal-\signal^\plus}}\\
    \leq \reg(\signal)-\reg(\signal^\plus) + (\norm{\eta}+2c_2)\norm{\Ao \signal -\Ao \signal^\plus} + c_1\norm{\eta}\Breg_\reg(\signal,\signal^\plus) \,,
    \end{multline*}
and thus  $(1-c_1\norm{\eta})\Breg_\reg(\signal,\signal^\plus)\leq \reg(\signal)-\reg(\signal^\plus) + (\norm{\eta}+2c_2)\norm{\Ao \signal - \Ao \signal^\plus}$.
\end{proof}

\begin{corollary}[Convergence rates under source condition]\label{cor:rates}
    Let the conditions of Lemma~\ref{lem:source} hold  and suppose
    \begin{align*}
    &\alpha(\delta) \asymp \sqrt{\delta}
    \\
    &\abs{ \reg_{n(\delta)}( \zsignal_{n(\delta)} ) - \reg(\signal^\plus )}  = \mathcal O(\sqrt \delta)
    \\
    & \sup \{ \lvert \reg_{n(\delta)}(\signal ) - \reg(\signal)\rvert \mid  \signal \in \dom_{{n(\delta)},M} \}= \mathcal O(\sqrt \delta)
    \\
    &\norm{ \Ao_{n(\delta)} \zsignal_{n(\delta)} - \Ao\signal^\plus } = \mathcal O(\sqrt \delta)
    \\
    & \sup \{
       \norm{\Ao_{n(\delta)} \signal - \Ao \signal } \mid \signal \in \dom_{{n(\delta)},M} \}
       = \mathcal O(\sqrt \delta)
    \\
    & \sup \{
       \norm{\Ao_{n(\delta)} \signal  } \mid \signal \in \dom_{{n(\delta)},M} \} < \infty \,.
    \end{align*}
    Then  we have the convergence rates result
    \begin{equation}\label{eq:convergence_rate}
    	 \Breg_\reg(\signal_{\alpha(\delta),n(\delta)}^{\delta}, \signal^\plus)
	 = \mathcal{O}(\sqrt{\delta})\,.
    \end{equation}
\end{corollary}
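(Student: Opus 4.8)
The plan is to combine the variational inequality supplied by Lemma~\ref{lem:source} with the error estimate of Proposition~\ref{prop:rate1}, but with one essential refinement of the latter's bookkeeping. The hypotheses of the corollary are precisely those of Lemma~\ref{lem:source}, so (\ref{r5}) holds with $\phi(t)=c\sqrt{t}$, where $c\coloneqq\norm{\eta}+2c_2$, and $\beta=1-c_1\norm{\eta}>0$. The prescribed rule $\alpha(\delta)\asymp\sqrt{\delta}$ coincides with the choice $\alpha\asymp\delta/\phi(\delta)$ of Theorem~\ref{thm:rate1}; moreover $\phi(\tau\delta)=\mathcal O(\sqrt\delta)$ and $\phi^{-\ast}(\tau\alpha)/(\tau\alpha)=\mathcal O(\alpha)=\mathcal O(\sqrt\delta)$, so the ``$\phi$-part'' of the estimate already has the right order.

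The subtle point is that one cannot simply read off the result from Theorem~\ref{thm:rate1}. With the quadratic distance $\dist(\data_1,\data_2)=\norm{\data_1-\data_2}^2$, the uniform supremum defining $a_{n,\delta}$ only gives $a_{n,\delta}\leq\sup_{\signal\in\dom_{n,M}}\norm{\Ao_n\signal-\Ao\signal}\cdot\mathcal O(1)=\mathcal O(\sqrt\delta)$, since $\dom_{n,M}$ contains points whose residual is of order one; divided by $\alpha\asymp\sqrt\delta$ this would leave an $\mathcal O(1)$ term rather than a rate. (The companion quantity $\gamma_{n,\delta}$ causes no trouble: the factor $\norm{\Ao_{n}\zsignal_{n}-\Ao\signal^\plus}+\norm{\Ao\signal^\plus-\data^\delta}=\mathcal O(\sqrt\delta)$ forces $\gamma_{n,\delta}=\mathcal O(\delta)$.) I would therefore rerun the argument of Proposition~\ref{prop:rate1} for the actual minimizer $\signal\coloneqq\signal^\delta_{\alpha,n}$, using that for this $\phi$ and this distance the penalty linearizes, $\phi(\dist(\Ao\signal,\data))=c\,\norm{\Ao\signal-\Ao\signal^\plus}$.

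Concretely, starting from $\norm{\Ao_n\signal-\data^\delta}^2+\alpha\reg_n(\signal)\leq\norm{\Ao_n\zsignal_n-\data^\delta}^2+\alpha\reg_n(\zsignal_n)$, I would invoke (\ref{r5})---legitimate once $\abs{\reg(\signal)-\reg(\signal^\plus)}<\epsilon$, which holds for small $\delta,\alpha$ and large $n$ by Theorem~\ref{thm:conv}---and replace $\reg$ by $\reg_n$ via $\rho_n$ and $\lambda_n$. The right-hand fidelity satisfies $\norm{\Ao_n\zsignal_n-\data^\delta}\leq\norm{\Ao_n\zsignal_n-\Ao\signal^\plus}+\sqrt\delta=\mathcal O(\sqrt\delta)$, hence $\norm{\Ao_n\zsignal_n-\data^\delta}^2=\mathcal O(\delta)$. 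The crux is to retain the discrete residual $s\coloneqq\norm{\Ao_n\signal-\data^\delta}$ instead of converting it to the true one: bounding $\norm{\Ao\signal-\Ao\signal^\plus}\leq s+\norm{\Ao_n\signal-\Ao\signal}+\sqrt\delta$ lets the operator error enter only linearly, and completing the square $-s^2+\alpha c\,s\leq\alpha^2c^2/4$ absorbs the linearized penalty into the data term.

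Dividing the resulting inequality by $\alpha\beta$, every surviving contribution is $\mathcal O(\sqrt\delta)$: the right-hand fidelity gives $\mathcal O(\delta)/\alpha=\mathcal O(\sqrt\delta)$, the completed square gives $\mathcal O(\alpha)=\mathcal O(\sqrt\delta)$, the linear residual bound contributes $c\sup_{\signal\in\dom_{n,M}}\norm{\Ao_n\signal-\Ao\signal}/\beta$ and $c\sqrt\delta/\beta$, and $\rho_{n(\delta)}+\lambda_{n(\delta)}=\mathcal O(\sqrt\delta)$ by hypothesis; the assumption $\sup_{\signal\in\dom_{n,M}}\norm{\Ao_n\signal}<\infty$ is used only to keep the minimizer inside a fixed set $\dom_{n,M}$ so that the uniform approximation conditions apply. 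The main obstacle is exactly the $a_{n,\delta}$ amplification flagged above; the resolution---letting the $\mathcal O(\sqrt\delta)$ operator error enter through the first power of the residual, which is possible precisely because $\phi$ is a square root and the fidelity is quadratic---is the heart of the proof.
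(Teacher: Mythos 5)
Your proposal is correct, and it takes a genuinely different---and in fact more careful---route than the paper. The paper's own proof is a two-line citation: it invokes Theorem~\ref{thm:rate1} with $\phi(t)=(\norm{\eta}+2c_2)\sqrt{t}$ from Lemma~\ref{lem:source}, remarking only that since $\dist$ is the squared norm, the $\mathcal O(\sqrt\delta)$ norm-approximation hypotheses supply the $\dist$-approximation rates required there. Your objection to this shortcut is well founded: the conversion works for $\gamma_{n,\delta}$, where both factors in the difference-of-squares estimate $\abs{\norm{a}^2-\norm{b}^2}\leq\norm{a-b}\,(\norm{a}+\norm{b})$ are $\mathcal O(\sqrt\delta)$, so $\gamma_{n,\delta}=\mathcal O(\delta)$; but for $a_{n,\delta}$ the supremum over $\dom_{n,M}$ leaves the second factor merely bounded (this is where the hypothesis $\sup\{\norm{\Ao_{n(\delta)}\signal}\mid\signal\in\dom_{n(\delta),M}\}<\infty$ enters), so one only gets $a_{n,\delta}=\mathcal O(\sqrt\delta)$, and the generic estimate \eqref{eq:rate} then yields $a_{n,\delta}/\alpha=\mathcal O(1)$ rather than a rate. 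Your repair---rerunning the proof of Proposition~\ref{prop:rate1} for the quadratic fidelity, keeping the discrete residual $s=\norm{\Ao_n\signal_{\alpha,n}^{\delta}-\data^\delta}$, exploiting that $\phi(\dist(\Ao\signal,\Ao\signal^\plus))=c\norm{\Ao\signal-\Ao\signal^\plus}$ linearizes, letting the operator error enter only through the first power via $\norm{\Ao\signal-\Ao\signal^\plus}\leq s+\norm{\Ao_n\signal-\Ao\signal}+\sqrt\delta$, and absorbing $-s^2+\alpha c\,s\leq\alpha^2c^2/4$ by completing the square---is exactly what is needed: after dividing by $\alpha\beta$ with $\alpha\asymp\sqrt\delta$, every term is $\mathcal O(\sqrt\delta)$, which establishes the corollary where a literal reading of the paper's citation leaves a gap. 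One small inaccuracy: the hypothesis $\sup\{\norm{\Ao_{n(\delta)}\signal}\}<\infty$ is not what keeps the minimizer inside $\dom_{n,M}$---that follows from the minimality inequality bounding $\reg_n(\signal_{\alpha,n}^{\delta})$ by $\dist(\Ao_n\zsignal_n,\data^\delta)/\alpha+\reg_n(\zsignal_n)=\mathcal O(1)$, as in the proof of Theorem~\ref{thm:conv}; its actual role is in the difference-of-squares bound above, and your refined argument in fact never needs it. This mis-attribution does not affect the correctness of your proof.
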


\begin{proof}
    Follows from Theorem~\ref{thm:rate1} and Lemma~\ref{lem:source}. Note that we use $\| \edot \|$ in the theorem, while $\dist (\data_1,\data_1) = \| \data_1-\data_2\|^2$ uses the  squared norm $\| \edot \|^2$ and thus the approximation rates for the terms concerning $\Ao_{n(\delta)}$ are order $\sqrt{\delta}$ instead of $\delta$ as in Theorem~\ref{thm:rate1}.
\end{proof}

In Corollary~\ref{cor:rates}, the approximation quality of the discrete operator $\Ao_n$ and the discrete and inexact regularization functional $\reg_n$ need to be of the same order.

\section{Application to a limited data problem in PAT}
\label{sec:PAT}

Photoacoustic Tomography (PAT) is an emerging non-invasive coupled-physics biomedical imaging technique with high contrast and high spatial resolution~\cite{kruger1995photoacoustic,paltauf2007photoacoustic}.
It works by illuminating a semi-transparent sample with short optical
pulses which causes heating of the sample followed by expansion and the subsequent emission of an acoustic wave.
Sensors on the outside of the sample measure the acoustic wave and these measurements are then used to reconstruct the initial pressure
$f\colon \R^d \to \R$, which provides information about the interior of the object.
The cases $d=2$ and $d=3$ are relevant for applications in PAT.
Here we only consider the case $d=2$ and assume a circular measurement geometry. The 2D case arises for
example when using integrating line detectors in PAT~\cite{paltauf2007photoacoustic}.

\subsection{Discrete forward operator}

The pressure data $p \colon\R^2\times [0,\infty)\to \R$ satisfies the wave equation $(\partial^2_t    - \Delta) p(\rr,t) = 0 \text{ for } (\rr,t) \in \R^2 \times (0,\infty)$ with initial data $p(\edot,0) = $ and $\partial_t p(\edot,0) = 0$.
In the case of circular measurement geometry one assumes that  $f$ vanishes outside the unit disc $D_1  \coloneqq \{\rr\in\R^2 \mid \|\rr\| < 1\}$ and the measurement sensors are located on the boundary $\partial D_1 = \mathbb{S}^1$.
We assume that the phantom will not generate any data for some region $I \subseteq D_1$, for example when the acoustic pressure generated inside $I$ is too small to be recorded. This masked PAT problem consists in the recovery of the function $f$ from sampled noisy measurements of $g = \Wo ( \mathds{1}_{I^c} f )$
where $\Wo$ denotes the solution operator of the wave equation  and $\mathds{1}_{I^c}$ the indicator function on $I^c \coloneqq \R^2\setminus I$.
Note that the resulting inverse problem can be seen of the combination of an inpainting problem and in inverse problems for the wave equation.

\begin{figure}[htb!]
    \includegraphics[width=\textwidth]{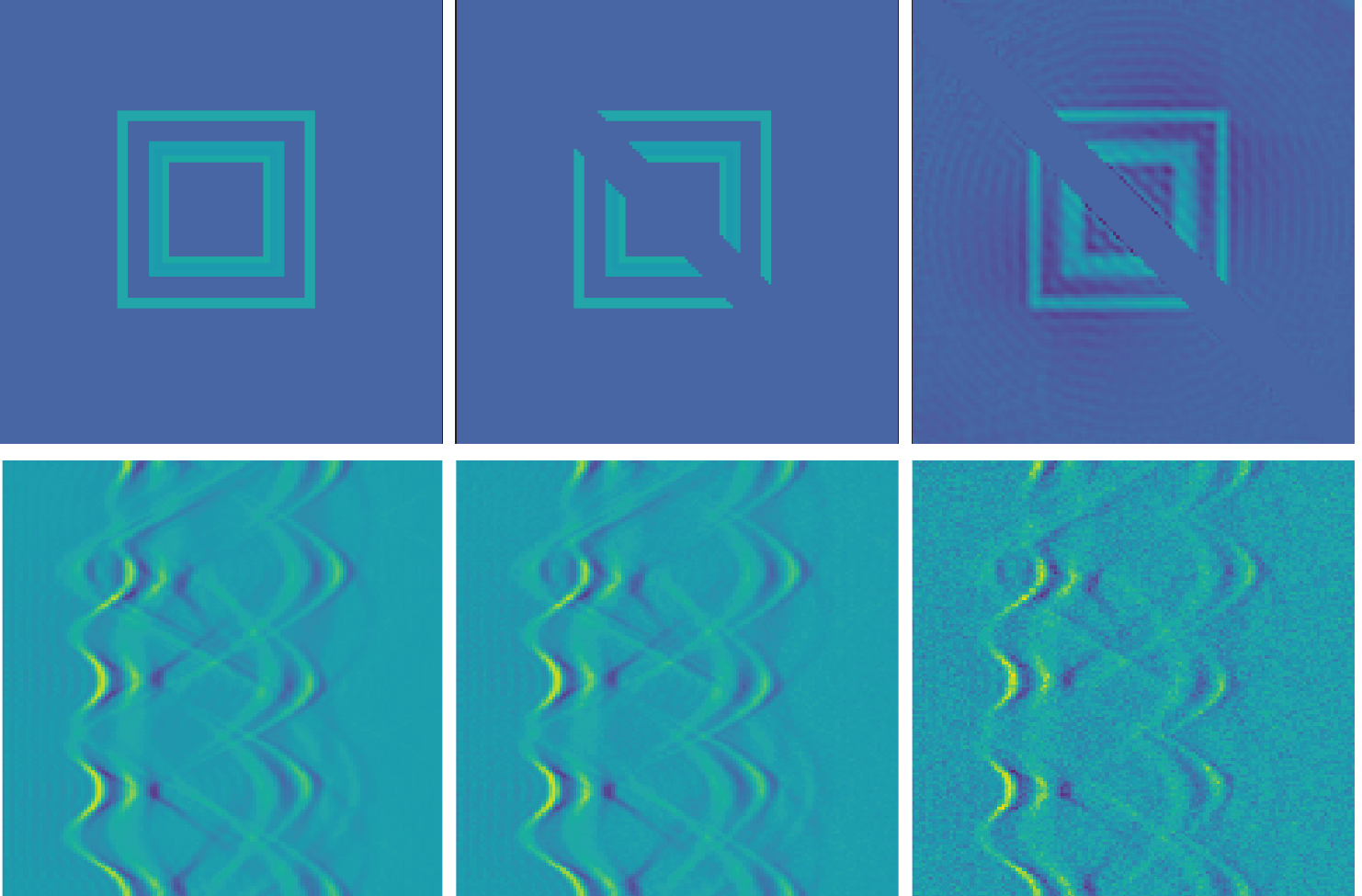}
    \caption{Top from left to right: phantom, masked phantom and initial reconstruction $\Ao^\plus \Ao \signal$.
    Bottom from left to right: data without  noise,  low noise $\sigma=0.01$  and high noise $\sigma=0.1$.}
    \label{fig:data}
\end{figure}

In order to implement the PAT forward operator we use a basis ansatz $f(\rr) = \sum_{i=1}^{N\times N} \signal_i \psi(\rr-\rr_i)$ where $\signal_i \in\R$ are basis coefficients and $\psi \colon \R^2 \to \R$ a generalized Kaiser-Bessel (KB) and $\rr_i = (i-1)/N$ with $ i = (i_1, i_2) \in \{1, \dots, N\}^2$.
The generalized KB  functions are  popular in tomographic inverse problems~\cite{matej1996practical,schwab2018galerkin,wang2014discrete,wang2012investigation}  and denote radially symmetric functions with support in $D_R$ defined by  
\begin{equation}
    \psi(\rr) \coloneqq
        \bigl( 1-\|\rr\|^2/R^2 \bigr)^{m /2}  \, \frac{I_m\bigl(\gamma\sqrt{1-\|\rr\|^2/R^2} \bigr)}{I_m(\gamma)}
        \quad \text{for} \|\rr\|\leq R \,.
\end{equation}
Here $I_m$ is the modified Bessel function of the first kind of order $n \in\N$ and the parameters $\gamma>0$ and $R$ denote the window taper and  support radius, respectively. Since $\Wo$ is linear we have $\Wo f = \sum_{i=1}^{N \times N} \signal_i \Wo (\psi(\edot-\rr_i)) $.  For convenience we will use a pseudo-3D approach where use the 3D solution of $\Wo \psi$ for which there exists an analytical representation~\cite{wang2014discrete}.
Denote by $\sr_k$ uniformly spaced sensor locations on $\mathbb{S}^1$ and by $t_j > 0$ uniformly sampled measurement times in $[0,2]$.
Define the $N_t N_s \times N^2$ model matrix by $\Wd_{N_t(k-1)+j,N(i_1-1)+i_2} =  \Wo (\psi(\edot-\rr_i))(\sr_k,t_j)$ and an $N^2\times N^2$ diagonal matrix by  $(\Mo_{I})_{N(i_1-1)+i_2,N(i_1-1)+i_2} = 1$ if  $\rr_i \in I^c$ and zero otherwise. Let $\Wd \Mo_{I} = \Uo \Sigmao \transpose{\Vo}$ be the singular valued decomposition. We then consider the discrete forward matrix $\Ao  = \Uo  \Sigmao_\star  \transpose{\Vo}$
where $\Sigmao_\star$ is the diagonal matrix derived from $\Sigmao$ by setting singular values smaller than some $\sigma_\star$ to zero. In our experiments we use $N = N_t=128$, $N_s=150$ and take $I$ fixed as a diagonal stripe of width $0.34$.

\begin{algorithm}
    \DontPrintSemicolon
    \KwIn{$\data \in \Y$, $\signal_0\in \X$, $\alpha,s>0$.}
    \KwOut{reconstruction $\signal$}
    \For{$\ell = 1, \ldots, N_{\rm iter}$}{
        $\signal_{\ell+1/2} = \signal_{\ell-1} - s\alpha\nabla \reg_n(\signal_{\ell-1})$\\
        $\signal_\ell = (\transpose{\Ao}\Ao - s \operatorname{Id})^{-1}\left(\transpose{\Ao}\data + s \signal_{\ell+1/2}\right)$
    }
    \caption{NETT optimization.
    }\label{alg:NETT}
\end{algorithm}

\subsection{Discrete NETT}
\label{sec:NETT_training}

We consider the discrete NETT  with discrepancy term $ \dist(\Ao\signal, \data^\delta) = \lVert\Ao \signal - \data^\delta \rVert_2^2/2$ and regularizer given by
\begin{equation}\label{eq:concrete_NETT}
         \reg^{(m)} (\signal) =  \norm{\signal - \net^{(m)}(\signal)}_2^2 +   \beta \norm{\nabla \signal}_{1,\epsilon} \,,
\end{equation}
where $ \norm{\nabla \signal}_{1,\epsilon} \coloneqq \sum_{i_1,i_2=1}^{128} \sqrt{ \lvert \rr_{i_1+1,i_2}-\rr_i\rvert^2 + \lvert\rr_{i_1,i_2+1}-\rr_{i_1,i_2} \rvert^2 + \epsilon^2}$   with $\epsilon > 0$  is a smooth version of the total variation ~\cite{acar1994analysis}and $\net^{(m)}$ is a learnable network. We take  $\net^{(m)}$ as the U-Net~\cite{ronneberger2015unet} with residual connection, which has first been applied to PAT image reconstruction in~\cite{antholzer2019deep}.
We generate training data that consist of square shaped rings with random profile and random location. See Figure~\ref{fig:data} for an example of one such phantom (note that all plots in signal space use the same colorbar) and the corresponding data. We get a set of phantoms $\signal_1, \dots , \signal_{1000}$ and corresponding basic reconstructions $\bad_a \coloneqq \Ao^\plus (\Ao \signal_a + \eta_a)$, where $\Ao^\plus$ is the pseudo-inverse and $\eta_a$ is Gaussian noise with standard deviation of $\sigma\norm{\Ao \signal_a}_\infty$ with $\sigma=0.01$. The networks are trained by minimizing
$ \sum_{a=1}^{1000} \lVert \net^{(m)} (\bad_a) - \signal_a \rVert_1 + \gamma \lVert \net^{(m)} (\signal_a) - \signal_a\rVert_1 $ where we used the Adam optimizer with learning rate 0.01 and $\gamma=0.1$. The considered loss is that we want the trained regularizer to give small values for $\signal_a$ and large values for $\bad_a$. The strategy is similar to~\cite{li2020nett} but we use the final output of the network for the regularizer as   proposed in~\cite{antholzer2019nett}. To minimize~\eqref{eq:concrete_NETT} we use Algorithm~\ref{alg:NETT}  which implements a forward-backward scheme  \cite{combettes2011proximal}.

\begin{figure}[htb!]
    \centering
    \includegraphics[width=\textwidth]{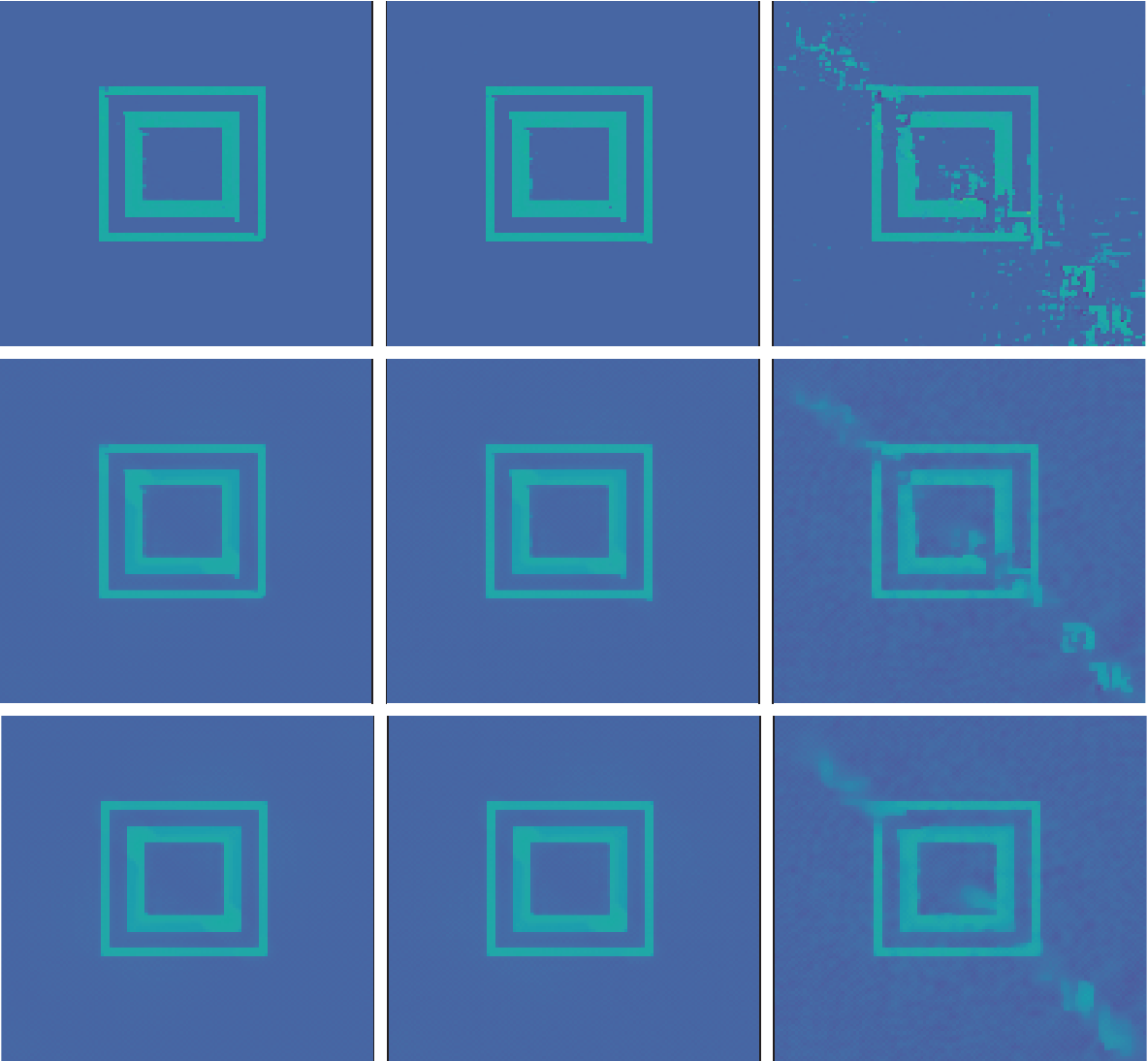}
    \caption{Top row: reconstructions using post-processing network  $\Phi^{(1)}$.
    Middle row: NETT reconstructions using   $\reg^{(1)}$.
    Bottom row: NETT reconstructions using $\reg^{(3)}$.
    From Left to Right: Reconstructions from data without noise, low noise ($\sigma=0.01$) and high noise ($\sigma=0.1)$.}
    \label{fig:rec1}
\end{figure}

\begin{figure}[htb!]
    \centering
    \includegraphics[width=0.8\textwidth]{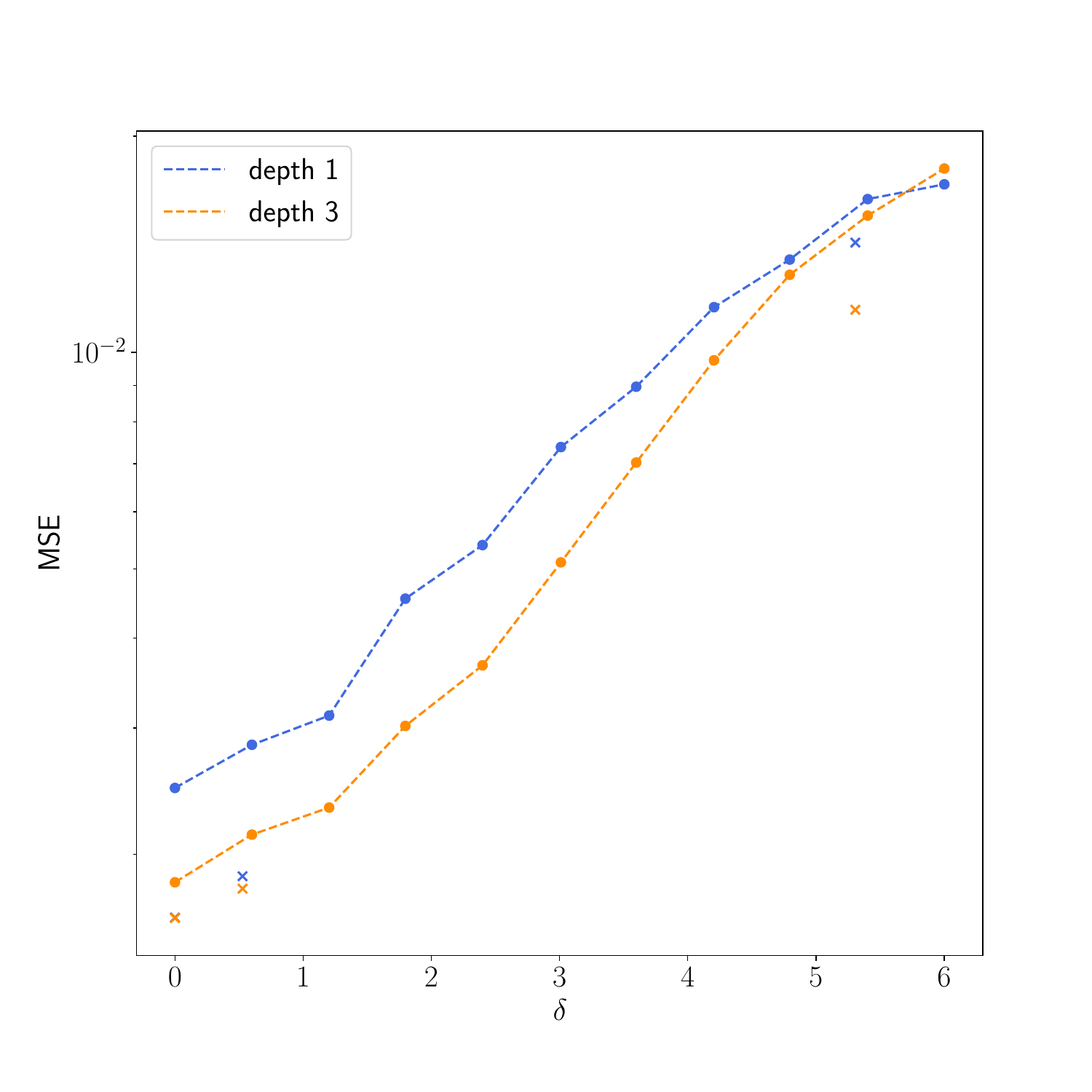}
    \caption{Semilogarithmic plot of the mean squared errors of the NETT  using $\reg^{(1)}$ and $\reg^{(3)}$ depending on the noise level. The crosses are the values for the phantoms in Figure~\ref{fig:rec1}.}
    \label{fig:convergence}
\end{figure}

\subsection{Numerical results}

For the  numerical results we train two regularizers $\reg^{(1)}$ and $\reg^{(3)}$ as described in Section~\ref{sec:NETT_training}. The networks are implemented using PyTorch~\cite{pytorch}. We also use PyTorch in order to calculate the gradient $\nabla_x \reg^{(m)}$. We take $N_{\rm iter} = 15$, $s=0.25$ and $\signal_0 = \net^{(m)} \transpose{\Ao}\data$ in Algorithm~\ref{alg:NETT} and compute  the inverse $(\transpose{\Ao}\Ao - s \operatorname{Id})^{-1}$ only once and then use it for all examples. We set $\alpha =0.015$ for the noise-free case, $\alpha = 0.016$ for the low noise case and $\alpha=0.02$ for the high noise cases, respectively, and selected a fixed $\beta = 15$. We expect that the NETT functional will yield better results due to data consistency, which is mainly helpful outside the masked center diagonal.

\begin{figure}[htb!]
    \centering
    \includegraphics[width=\textwidth]{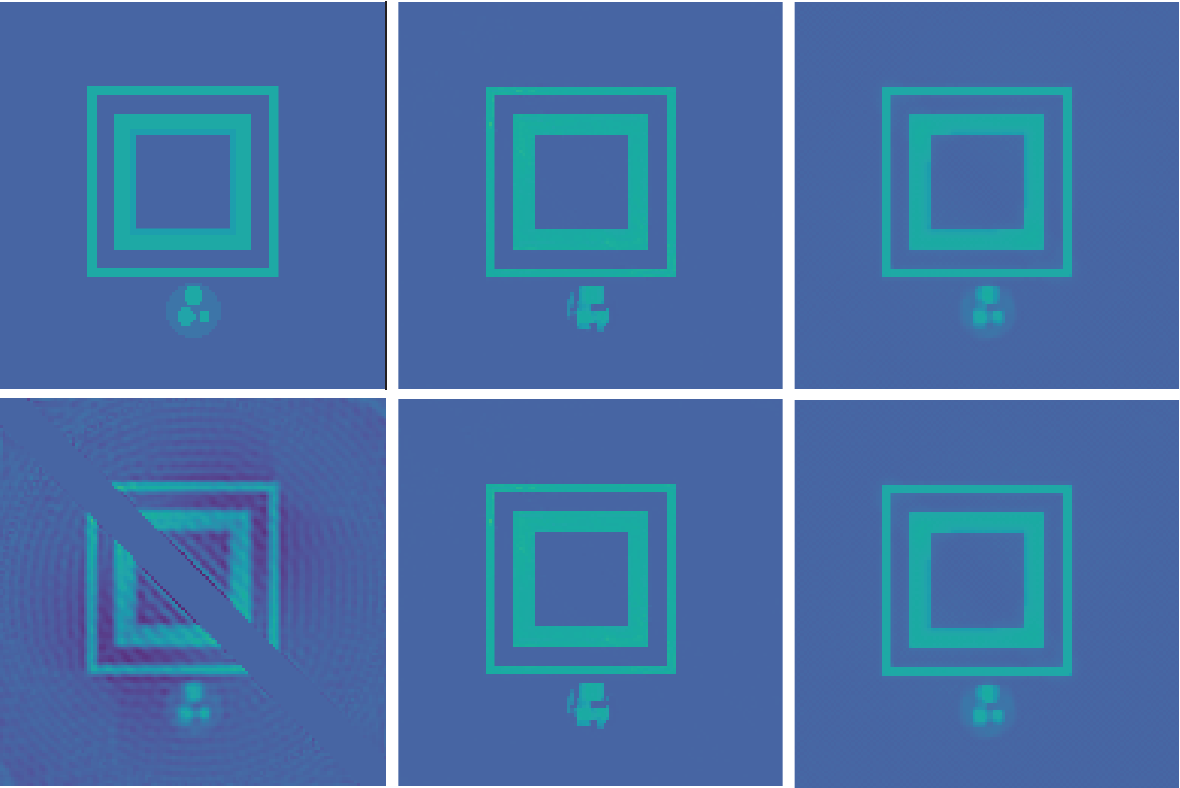}
    \caption{Left column:  phantom with a structure not contained in the training data (top) and  pseudo inverse  reconstruction (bottom).
    Middle column: Post-processing reconstructions using exact (top) and noisy data (bottom).
    Right column: NETT reconstructions using exact (top) and noisy data (bottom).}
    \label{fig:rec2}
\end{figure}

First we use the phantom from the testdata shown in Figure~\ref{fig:data}. The results using post processing and NETT are shown in Figure~\ref{fig:rec1}. One sees that all results with higher noise than used during training are not very good. This indicates that one should use similar noise as in the later applications even for the NETT. Figure \ref{fig:convergence} shows the average  error using 10 test phantoms similar to the on in Figure~\ref{fig:data}. Careful numerical  comparison of the numerical convergence rates and the theoretical results of Theorem~\ref{cor:rates} is an interesting aspect of further research. To investigate the stability of our method with respect to phantoms that are different from the training data we create a phantom with different structures  as seen in Figure~\ref{fig:rec2}. As expected, the  post processing network $\Phi^{(3)}$  is not really able to reconstruct the circles object, since it is quite different from the training data, but it also does not break down completely. On the other hand, the NETT approach yields good results due to data consistency.

\section{Conclusion}
\label{sec:conclusion}

We have analyzed the convergence a discretized NETT approach and derived the convergence rates under certain assumptions on the approximation quality of the involved operators. We  performed numerical experiments using a limited data problem for PAT that  is the combination of an inverse problem for the wave equation and an  inpainting problem. To the best of our knowledge this is the first  such problem studied with  deep learning. The NETT approach yields better results that post processing for phantoms different from the training data. NETT still fails to  recover some missing parts of the phantom in cases  the data contains more noise than the training data. This highlights  the relevance  of using different regularizers for different noise levels.

\section*{Acknowledgments}
S.A. and M.H.  acknowledge support of the Austrian Science Fund (FWF), project P 30747-N32.


\begin{thebibliography}{10}

\bibitem{acar1994analysis}
R.~Acar and C.~R. Vogel.
\newblock Analysis of bounded variation penalty methods for ill-posed problems.
\newblock {\em Inverse Probl.}, 10(6):1217, 1994.

\bibitem{adler2017solving}
J.~Adler and O.~{\"O}ktem.
\newblock Solving ill-posed inverse problems using iterative deep neural
  networks.
\newblock {\em Inverse Probl.}, 33(12):124007, 2017.

\bibitem{aggarwal2018modl}
H.~K. Aggarwal, M.~P. Mani, and M.~Jacob.
\newblock {MoDL:} model-based deep learning architecture for inverse problems.
\newblock {\em IEEE Trans. Med. Imaging}, 38(2):394--405, 2018.

\bibitem{antholzer2019deep}
S.~Antholzer, M.~Haltmeier, and J.~Schwab.
\newblock Deep learning for photoacoustic tomography from sparse data.
\newblock {\em Inverse Probl. Sci. Eng.}, 27(7):987--1005, 2019.

\bibitem{antholzer2019nett}
S.~Antholzer, J.~Schwab, J.~Bauer-Marschallinger, P.~Burgholzer, and
  M.~Haltmeier.
\newblock Nett regularization for compressed sensing photoacoustic tomography.
\newblock In {\em Photons Plus Ultrasound: Imaging and Sensing 2019}, volume
  10878, page 108783B, 2019.

\bibitem{combettes2011proximal}
P.~L. Combettes and J.-C. Pesquet.
\newblock Proximal splitting methods in signal processing.
\newblock In {\em Fixed-point algorithms for inverse problems in science and
  engineering}, pages 185--212. Springer, 2011.

\bibitem{de2019deep}
M.~V. de~Hoop, M.~Lassas, and C.~A. Wong.
\newblock Deep learning architectures for nonlinear operator functions and
  nonlinear inverse problems.
\newblock {\em arXiv:1912.11090}, 2019.

\bibitem{engl1996regularization}
H.~W. Engl, M.~Hanke, and A.~Neubauer.
\newblock {\em Regularization of inverse problems}, volume 375 of {\em
  Mathematics and its Applications}.
\newblock Kluwer Academic Publishers Group, Dordrecht, 1996.

\bibitem{grasmair2010generalized}
M.~Grasmair.
\newblock Generalized {B}regman distances and convergence rates for non-convex
  regularization methods.
\newblock {\em Inverse Probl.}, 26(11):115014, 2010.

\bibitem{haltmeier2020regularization}
M.~Haltmeier and L.~V. Nguyen.
\newblock Regularization of inverse problems by neural networks.
\newblock {\em arXiv:2006.03972}, 2020.

\bibitem{ivanov2002theory}
V.~K. Ivanov, V.~V. Vasin, and V.~P. Tanana.
\newblock {\em Theory of linear ill-posed problems and its applications}.
\newblock Inverse and Ill-posed Problems Series. VSP, Utrecht, second edition,
  2002.

\bibitem{kobler2017variational}
E.~Kobler, T.~Klatzer, K.~Hammernik, and T.~Pock.
\newblock Variational networks: connecting variational methods and deep
  learning.
\newblock In {\em German conference on pattern recognition}, pages 281--293.
  Springer, 2017.

\bibitem{kruger1995photoacoustic}
R.~Kruger, P.~Lui, Y.~Fang, and R.~Appledorn.
\newblock Photoacoustic ultrasound (paus) -- reconstruction tomography.
\newblock {\em Med. Phys.}, 22(10):1605--1609, 1995.

\bibitem{li2020nett}
H.~Li, J.~Schwab, S.~Antholzer, and M.~Haltmeier.
\newblock {NETT}: solving inverse problems with deep neural networks.
\newblock {\em Inverse Probl.}, 36(6):065005, 2020.

\bibitem{lunz2018adversarial}
S.~Lunz, O.~{\"O}ktem, and C.-B. Sch{\"o}nlieb.
\newblock Adversarial regularizers in inverse problems.
\newblock In {\em NIPS}, pages 8507--8516, 2018.

\bibitem{matej1996practical}
S.~Matej and R.~M. Lewitt.
\newblock Practical considerations for 3-d image reconstruction using
  spherically symmetric volume elements.
\newblock {\em IEEE Trans. Med. Imaging}, 15(1):68--78, 1996.

\bibitem{morozov1984methods}
V.~A. Morozov.
\newblock {\em Methods for Solving Incorrectly Posed Problems}.
\newblock Springer Verlag, 1984.

\bibitem{mukherjee2020learned}
S.~Mukherjee, S.~Dittmer, Z.~Shumaylov, S.~Lunz, O.~{\"O}ktem, and C.-B.
  Sch{\"o}nlieb.
\newblock Learned convex regularizers for inverse problems.
\newblock {\em arXiv:2008.02839}, 2020.

\bibitem{natterer2001mathematical}
F.~Natterer and F.~W{\"u}bbeling.
\newblock {\em Mathematical Methods in Image Reconstruction}, volume~5 of {\em
  Monographs on Mathematical Modeling and Computation}.
\newblock SIAM, Philadelphia, PA, 2001.

\bibitem{obmann2019sparse}
D.~Obmann, L.~Nguyen, J.~Schwab, and M.~Haltmeier.
\newblock Sparse $\ell^q$-regularization of inverse problems using deep
  learning.
\newblock {\em arXiv:1908.03006}, 2019.

\bibitem{obmann2021augmented}
Obmann, D.; Nguyen, L.; Schwab, J.; Haltmeier, M.
\newblock Augmented NETT regularization of inverse problems.
\newblock {\em J. Phys. Commun.}, 5:105002,  2021.

\bibitem{paltauf2007photoacoustic}
G.~Paltauf, R.~Nuster, M.~Haltmeier, and P.~Burgholzer.
\newblock Photoacoustic tomography using a {M}ach-{Z}ehnder interferometer as
  an acoustic line detector.
\newblock {\em Appl. Opt.}, 46(16):3352--3358, 2007.

\bibitem{pytorch}
A.~Paszke, S.~Gross, et~al.
\newblock Pytorch: An imperative style, high-performance deep learning library.
\newblock In {\em NIPS}, pages 8024--8035. 2019.

\bibitem{poschl2008tikhonov}
C.~P{\"o}schl.
\newblock {\em Tikhonov regularization with general residual term}.
\newblock Thesis, 2008.

\bibitem{poschl2010discretization}
C.~P{\"o}schl, E.~Resmerita, and O.~Scherzer.
\newblock Discretization of variational regularization in banach spaces.
\newblock {\em Inverse Probl.}, 26(10):105017, 2010.

\bibitem{romano2017little}
Y.~Romano, M.~Elad, and P.~Milanfar.
\newblock The little engine that could: Regularization by denoising (red).
\newblock {\em SIAM J. Imaging Sci.}, 10(4):1804--1844, 2017.

\bibitem{ronneberger2015unet}
O.~Ronneberger, P.~Fischer, and T.~Brox.
\newblock U-net: Convolutional networks for biomedical image segmentation.
\newblock In N.~Navab, J.~Hornegger, W.~M. Wells, and A.~F. Frangi, editors,
  {\em MICCAI 2015}, pages 234--241, Cham, 2015. Springer.

\bibitem{scherzer2009variational}
O.~Scherzer, M.~Grasmair, H.~Grossauer, M.~Haltmeier, and F.~Lenzen.
\newblock {\em Variational methods in imaging}, volume 167 of {\em Applied
  Mathematical Sciences}.
\newblock Springer, New York, 2009.

\bibitem{schwab2018galerkin}
J.~Schwab, S.~Pereverzyev~Jr, and M.~Haltmeier.
\newblock A galerkin least squares approach for photoacoustic tomography.
\newblock {\em SIAM J. Numer. Anal.}, 56(1):160--184, 2018.

\bibitem{tikhonov1977solution}
A.~N. Tikhonov and V.~Y. Arsenin.
\newblock {\em Solutions of Ill-Posed Problems}.
\newblock John Wiley \& Sons, Washington, D.C., 1977.

\bibitem{tikhonov1998nonlinear}
A.~N. Tikhonov, A.~S. Leonov, and A.~G. Yagola.
\newblock {\em Nonlinear ill-posed problems. {V}ol. 1, 2}, volume~14 of {\em
  Applied Mathematics and Mathematical Computation}.
\newblock Chapman \& Hall, London, 1998.
\newblock Translated from the Russian.

\bibitem{wang2014discrete}
K.~{Wang}, R.~W. {Schoonover}, R.~{Su}, A.~{Oraevsky}, and M.~A. {Anastasio}.
\newblock Discrete imaging models for three-dimensional optoacoustic tomography
  using radially symmetric expansion functions.
\newblock {\em IEEE Trans. Med. Imaging}, 33(5):1180--1193, 2014.

\bibitem{wang2012investigation}
K.~Wang, R.~Su, A.~A. Oraevsky, and M.~A. Anastasio.
\newblock Investigation of iterative image reconstruction in three-dimensional
  optoacoustic tomography.
\newblock {\em Phys. Med. Biol.}, 57(17):5399, 2012.

\bibitem{ADMMnet}
Y.~Yang, J.~Sun, H.~Li, and Z.~Xu.
\newblock Deep {ADMM}-net for compressive sensing {MRI}.
\newblock In {\em Proc. 30th International Conference on Neural Information
  Processing Systems}, pages 10--18, 2016.

\bibitem{zhdanov2002geophysical}
M.~S. Zhdanov.
\newblock {\em Geophysical inverse theory and regularization problems},
  volume~36.
\newblock Elsevier, 2002.

\end{thebibliography}
\end{document}